\theoremstyle{plain}
\newtheorem{main}{Theorem}
\newtheorem{theorem}{Theorem}[section]
\newtheorem{lemma}[theorem]{Lemma}
\newtheorem{proposition}[theorem]{Proposition}
\newtheorem{corollary}[theorem]{Corollary}
\theoremstyle{remark}
\newtheorem{remark}[theorem]{Remark}
\newtheorem{definition}[theorem]{Definition}
\newtheorem{Claim}[theorem]{Claim}
\newtheorem{conjecture}[theorem]{Conjecture}
\newcommand{\Gibb}{\operatorname{Gibb}}
\newcommand{\s} {\sigma}
           \def\ea{\end{array}}
          \def\ec{\end{center}}
     \def\ed{\end{description}}
        \def\ee{\end{equation}}
       \def\eea{\end{eqnarray}}
     \def\eeaa{\end{eqnarray*}}
 \def\et{\end{thebibliography}}
\def\Diff{{\rm Diff}}
\def\SPH{\operatorname{SPH}}
\def\cA{{\mathcal A}}
\def\cC{{\mathcal C}}
\def\cB{{\mathcal B}}
\def\cF{{\mathcal F}}
\def\cW{{\mathcal W}}
\def\loc{\operatorname{loc}}
\def\diam{\operatorname{diam}}
\def\vep{\varepsilon}
\def\TT{{\mathbb T}}
\title{Invariance principle and rigidity of high entropy measures}
\author{Ali Tahzibi and Jiagang Yang}
\date{\today}
\address{Departamento de Matem\'atica,
  ICMC-USP S\~{a}o Carlos-SP, Brazil.}
\email{tahzibi@icmc.usp.br}
\address{Departamento de Geometria, Instituto de Matem\'atica e Estat\'istica, Universidade
Federal Fluminense, Niter\'oi, Brazil}
\email{yangjg\@@impa.br}
\begin{document}

\begin{abstract}

 A deep analysis of the Lyapunov exponents, for stationary sequence of matrices  going back to Furstenberg \cite{F}, for more general linear cocycles by Ledrappier \cite{L} and  generalized to the context of non-linear cocycles by Avila and Viana \cite{AV}, gives an invariance principle for invariant measures with vanishing central exponents. In this paper, we give a new criterium formulated in terms of entropy for the invariance principle and in particular, obtain a simpler proof for some of the known invariance principle results.

As a byproduct, we study ergodic measures of partially hyperbolic diffeomorphisms
whose center foliation is 1-dimensional and forms a circle bundle. We show that for
any  such $C^2$ diffeomorphism which is accessible, weak hyperbolicity of ergodic measures of high entropy implies that the system itself is of rotation type.
\end{abstract}

\maketitle

\setcounter{tocdepth}{1} \tableofcontents

\section{Introduction}

The topological entropy and the metric entropy of dynamical systems are celebrated invariants in the moduli of topological and metric conjugacy. For the continuous dynamics of compact spaces, they are related by the variational principle: The topological entropy is the supremum over the metric entropies of all  probability invariant measures.

A classical problem in ergodic theory is to determine whether the supremum of metric entropies is attained by invariant measures of maximal entropy. The number of such maximizing measures is another interesting question. 


The Lyapunov exponents are also  important numbers which measure the complexity of dynamics. They are defined almost everywhere with respect to any invariant probability measure. By Oseledets' theorem, for an invariant ergodic measure of a diffeomorphism $f\in\Diff^1(M)$ over a manifold $M$ with dimension $d$, there are $d$ numbers $ \lambda_1 \leq \lambda_2 \cdots \leq \lambda_d$ such that for a.e. $x \in M$ and for any $v \in TM \setminus \{0\}$, we have $\lim_{n \rightarrow \infty} \frac{1}{n} \log \|Df_x^n (v)\| = \lambda_i$ for some $1 \leq i \leq d.$ The $d-$numbers $\lambda_i$ are the \emph{Lyapunov exponents} of $(f, \mu).$ A measure $\mu$ is called \emph{hyperbolic} if all its Lyapunov exponents are non-zero. 

The entropy and Lyapunov exponents of smooth diffeomorphisms are related by celebrated Ruelle's inequality and Pesin's formula: ``The entropy is smaller than the sum of the positive Lyapunov exponents and the equality is equivalent to the smoothness of measure along unstable manifolds".
For surface diffeomorphisms, using Ruelle's inequality it is not difficult to see that any measure of non-zero entropy is hyperbolic.

Partially hyperbolic dynamics constitutes a successful branch of dynamics beyond uniformly hyperbolic systems (See next section for definitions).
For partially hyperbolic diffeomorphisms, the non-hyperbolicity  of invariant measures  comes from vanishing exponent in the central bundle.
One interesting problem in smooth ergodic theory is to verify the abundance of partially hyperbolic dynamics with natural measures (for instance, volume or measures of maximal entropy) with non-vanishing central exponents.
An approach to this problem is to find mechanisms to remove zero central exponents  (see \cite{GT} for a survey). Another way is to study the possible rigid properties of the systems with non-hyperbolic natural measures.

A deep analysis of the Lyapunov exponents, for stationary sequence of matrices  going back to Furstenberg \cite{F}, more general linear cocycles by Ledrappier \cite{L} and  generalized to the context of non linear cocycles by Avila and Viana \cite{AV}, gives an invariance principle for invariant measures with vanishing central exponents.  Speaking in the spirit  of  the works of Ledrappier and Avila-Viana, vanishing exponents on the central direction  reveals the ``deterministic behavior of the invariant measure along the central foliation". See subsection (\ref{invariance1}) in Preliminaries  for a more precise interpretation. The invariance principle had been noticed also by Baxendale \cite{Bax} (see also a neat proof in the circle action case in the work of Deroin-Klepstyn and Navas \cite{DKN} and the result of Crauel \cite{cra}.) 

In this paper we work with the notion of entropy along expanding foliations \cite{Y} and give a simple criterium for the invariance principle. In particular, we obtain a simpler proof for the invariance principle which was  formulated in terms of the Lyapunov exponents in the previous known results. We emphasize that  in the proof of invariance principle by Ledrappier and Avila-Viana a notion of entropy (Kullback information, see \cite{L}) along central foliation is hidden.

As a byproduct we obtain a rigidity result for partially hyperbolic diffeomorphisms with one dimensional compact central foliation.
We consider high entropy ergodic measures of partially hyperbolic dynamics with one dimensional compact central leaves and prove ``strong hyperbolicity" of such measures for typical dynamics. By strong hyperbolicity we mean that all high entropy ergodic measures have center exponent uniformly bounded away from zero.  So, our result is a rigidity statement for partially hyperbolic diffeomorphisms with one dimensional center bundle (see the precise setting in what follows): if there are high entropy invariant measures which are weakly hyperbolic (with  central Lyapunov exponent converging to zero) then in fact the dynamics is conjugate to isometric extension of Anosov homeomorphism. In particular, our result sheds light on some conjectures in smooth ergodic theory, specially in the quest for non-hyperbolic ergodic measures  related to a conjecture of D\'{i}az-Gorodetski in \cite{DG} (see  section \ref{quest} for more details). We thank S. Crovisier for the comments on the relation of our work with the iterated function systems setting and discussions on the Ledrappier-Young results.

\section{Statement of results}

Throughout this paper we will work with
partially hyperbolic diffeomorphisms. $f: M \rightarrow M$ is \emph{partially hyperbolic} if
there is a $Tf$-invariant splitting of the tangent
bundle $TM = E^s\oplus E^c \oplus E^u$, such that for all unit vectors
$v^\s\in E^\s_x\setminus \{0\}$ ($\s= s, c, u$) with $x\in M$ we have:
$$\|T_xfv^s\| < \|T_xfv^c\| < \|T_xfv^u\| $$
for some suitable Riemannian metric.  Furthermore $f$ satisfies:
$\|Tf|_{E^s}\| < 1$ and $\|Tf^{-1}|_{E^u}\| < 1$.  For partially hyperbolic diffeomorphisms, it is a well-known fact that there are foliations $\cF^\s$ tangent to the distributions $E^\s$
for $\s=s,u$ . The leaf of $\cF^\s$
containing $x$ will be called $\cF^\s(x)$, for $\s=s,u$. \par In general the central bundle $E^c$ may not be tangent to an invariant foliation. However, whenever it exists we denote it by $\mathcal{F}^c.$

Our first main result is a criterium in terms of entropy for the so-called invariance principle (See (\ref{invariance1}) and (\ref{invariance2}).) for cocycles over {\it Anosov homeomorphisms} (See Preliminaries section). 
The building block of the proof uses arguments similar to Ledrappier \cite{L}, Ledrappier-Young \cite{LY}. Our proof depends on the analysis of
partial entropy along expanding foliation of the measures (see \cite{L1}, \cite{LY2} when this notion firstly was introduced  and \cite{Y} for a recent generalization).
In particular, our approach permits  us to give an interpretation of the proof of the invariance principle obtained by Avila-Viana \cite{AV} in the case of cocycles on fiber bundles with compact fibers and over Anosov homeomorphism, without using deformation of cocycles.

Let $f: M \rightarrow M$ be a partially hyperbolic dynamics satisfying the following conditions (See Preliminaries, section (\ref{preliminaries}), for the definitions.):
\begin{itemize}
\item H1. $f$ is dynamically coherent with all center leaves compact,
\item H2. $f$ admits global holonomies,
\item H3. $f_c$ is a transitive topological Anosov homeomorphism, where $f_c$ is the induced dynamics satisfying $f_c \circ \pi = \pi \circ f$ and $\pi : M \rightarrow M/\mathcal{F}^c$ is the natural projection to the space of central leaves.
\end{itemize}

A large class of partially hyperbolic dynamics denoted by fibered partially hyperbolic systems satisfy (H1) and (H2) and all known examples satisfy (H3). In particular, it is shown in \cite{HP} that, 
over any 3 dimensional Nilmanifold different from the torus, every partially hyperbolic diffeomorphism satisfies (H1), 
(H2) and (H3).

 Denote by $h (\mu, \mathcal{F}^u)$  the ``entropy along unstable foliation of $f$" (See sub section \ref{entropyfoliation} for the details).  For an $f-$invariant measure  $\mu,$ let $\nu = \pi_*(\mu)$ and $\{\mu^u_x\}, \{\nu^u_{\pi(x)}\}$ denote respectively conditional measures of $\mu$ and $\nu$ along suitable measurable partitions sub-ordinated to the unstable foliation of $f$ and $f_c.$ We say $\mu \in Gibb^u_{\nu}(f)$ if $\pi_*(\mu^u_x) = \nu^u_{\pi(x)}$ for $\mu-$almost every $x \in M.$ This property is equivalent to the so-called $u-$invariance of $\{\mu^c_x\}_{x \in M}$ (conditional measures of $\mu$ along central foliation) under unstable holonomies. See Proposition  \ref{gibbs-uinvariant} for the details.
 
 We prove the following main theorem:

\begin{main}  \label{u-invariantprinciple} (Entropy criterium for $u-$invariance)
Let $f$ be a $C^2-$partially hyperbolic diffeomorphism satisfying H1, H2 and H3. Suppose $\mu$ be an $f-$invariant probability and $\nu := \pi_*  \mu.$ Then, $h_{\mu} (f, \mathcal{F}^u) \leq h_{\nu} (f_ c)$ and equality occurs if and only if  $\mu \in Gibb^u_{\nu} (f).$
\end{main}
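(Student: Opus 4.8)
The plan is to compare the two entropies via the Ledrappier--Young machinery for entropy along expanding foliations, using the projection $\pi$ to reduce the unstable foliation of $f$ to that of $f_c$. First I would fix an increasing measurable partition $\xi$ of $M$ subordinate to $\cF^u$ in the sense of Ledrappier--Young, chosen so that its projection $\pi(\xi)$ is an increasing measurable partition of $M/\cF^c$ subordinate to the unstable foliation of $f_c$; this is possible because $\pi$ maps unstable leaves of $f$ to unstable leaves of $f_c$ (global holonomies, H2) and because center leaves are compact (H1), so $\pi$ is finite-to-one on each unstable leaf and the fibers of $\pi$ are saturated by center leaves. With such a $\xi$ one has $h_\mu(f,\cF^u) = H_\mu(f^{-1}\xi \mid \xi)$ and $h_\nu(f_c) = h_\nu(f_c,\cF^u) = H_\nu(f_c^{-1}\pi(\xi)\mid\pi(\xi))$, the latter by the Ledrappier--Young formula applied to the Anosov homeomorphism $f_c$ (for which the unstable foliation carries the full entropy).

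Next I would disintegrate. Writing $\{\mu^u_x\}$ and $\{\nu^u_{\pi(x)}\}$ for the conditional measures along the atoms of $\xi$ and $\pi(\xi)$ respectively, the factor map $\pi$ pushes $\mu^u_x$ to a measure $\pi_*\mu^u_x$ on the atom $\pi(\xi)(\pi(x))$ which is absolutely continuous with respect to $\nu^u_{\pi(x)}$; let $\rho_x$ be the corresponding conditional "fiber" information, i.e. the entropy of the partition into $\pi$-fibers computed inside the unstable atom. The chain rule for conditional entropies then gives
\[
H_\mu(f^{-1}\xi\mid\xi) = H_\nu(f_c^{-1}\pi(\xi)\mid\pi(\xi)) + \big(I_{\mu}(\pi\mid f^{-1}\xi) - I_\mu(\pi\mid\xi)\big),
\]
where the last bracket measures how much "new information about the $\pi$-fiber" is created by one backward iterate. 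Since $f$ expands $\cF^u$ and $\pi$ conjugates the dynamics, $f^{-1}$ cannot increase the fiber information relative to the coarser partition; more precisely $f^{-1}\xi$ refines $\xi$, so $I_\mu(\pi\mid f^{-1}\xi)\le I_\mu(\pi\mid\xi)$ pointwise, hence the bracket is $\le 0$ after integration, giving $h_\mu(f,\cF^u)\le h_\nu(f_c)$. Equality forces the bracket to vanish, i.e. the fiber information is $f^{-1}$-invariant, which by the expansiveness of $f$ along $\cF^u$ (iterating the inequality and using that $\bigvee_n f^{-n}\xi$ generates the unstable $\sigma$-algebra) forces $\pi_*\mu^u_x=\nu^u_{\pi(x)}$ for $\mu$-a.e.\ $x$, that is $\mu\in\Gibb^u_\nu(f)$. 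Conversely, if $\mu\in\Gibb^u_\nu(f)$ then $\pi_*\mu^u_x=\nu^u_{\pi(x)}$ and one checks directly that the two conditional entropies coincide.

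The main obstacle I anticipate is the first step: arranging a single measurable partition $\xi$ that is \emph{simultaneously} subordinate to $\cF^u$ for $f$ and projects to one subordinate to $\cF^u$ for $f_c$, and verifying that $h_\mu(f,\cF^u)$ and $h_\nu(f_c)$ are both computed by this partition in the form $H(\cdot\mid\cdot)$ with the right measurability of the disintegrations. This requires care because $\pi$ is only finite-to-one (not injective) on unstable leaves, so the fiber information $\rho_x$ is a genuinely nontrivial quantity that must be controlled uniformly; compactness of center leaves (H1) and the existence of global holonomies (H2) are exactly what make it bounded and measurable. A secondary technical point is the identification $h_\nu(f_c) = h_\nu(f_c,\cF^u)$, which uses that $f_c$ is a topological Anosov homeomorphism (H3) so that all of its metric entropy lives along the unstable foliation; here I would invoke the Ledrappier--Young entropy formula for $f_c$, or prove the needed inequality $h_\nu(f_c)\le h_\nu(f_c,\cF^u)$ directly from the Anosov structure. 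Once these foundational identifications are in place, the inequality and the rigidity in the equality case follow from the monotonicity of conditional information under refinement, exactly as in Ledrappier's original argument.
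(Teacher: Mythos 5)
Your setup (subordinate partitions $\xi^u$ for $f$ and $\xi^u_c$ for $f_c$ chosen compatibly with $\pi$ via a Markov partition, the identification $h_\nu(f_c)=H_\nu(f_c^{-1}\xi^u_c\mid\xi^u_c)$, and the equality-case bootstrap via $f^n$ and generation of the unstable $\sigma$-algebra) is essentially the paper's, but the central step has a genuine gap. The claimed chain rule
$H_\mu(f^{-1}\xi\mid\xi)=H_\nu(f_c^{-1}\pi(\xi)\mid\pi(\xi))+\bigl(I_\mu(\pi\mid f^{-1}\xi)-I_\mu(\pi\mid\xi)\bigr)$
is not meaningful as stated: the ``partition into $\pi$-fibers restricted to an unstable atom'' is essentially the partition into points (since $\pi$ is injective, not merely finite-to-one, on unstable plaques), so both bracketed terms are infinite, and a formal chain-rule manipulation of exactly this type would in fact ``prove'' that the two conditional entropies are always \emph{equal} (the two fiber terms coincide formally by $f$-invariance), contradicting the theorem; the whole point is that the infinite quantity $H_\mu(\xi^u\mid\pi^{-1}\xi^u_c)$ blocks such cancellations. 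Moreover, your assertion that $\pi_*\mu^u_x$ is absolutely continuous with respect to $\nu^u_{\pi(x)}$ is unjustified and false in general: if $\mu^{cu}_x$ is carried by a measurable graph over the center direction, then $\pi_*\mu^u_x$ is atomic while $\nu^u_{\pi(x)}$ need not be, and this is precisely the situation where the inequality is strict. Absolute continuity (indeed equality) of these projections is the \emph{conclusion} in the equality case, not an available hypothesis. Finally, the pointwise monotonicity $I_\mu(\pi\mid f^{-1}\xi)\le I_\mu(\pi\mid\xi)$ is not a valid pointwise statement even when the quantities are finite.

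What is missing is the actual mechanism producing the one-sided inequality. The paper first proves, unconditionally, that the disintegration along \emph{center-unstable} plaques satisfies $\pi_*(\mu^{cu}_x)=\nu^u_{\pi(x)}$ (its Lemma \ref{quotient}); then, writing each cu-plaque as a union of the sets $B_j$ lying over the atoms of $f_c^{-1}\xi^u_c$, it compares $\int g\bigl(\mu^u_\theta(B_j)\bigr)\,d\tilde\mu^{cu}_t(\theta)$ with $g\bigl(\mu^{cu}_t(B_j)\bigr)$ for $g(t)=-t\log t$ via Jensen's inequality (equivalently, positivity of the Kullback information of $\pi_*\mu^u_\theta$ against $\nu^u_t$ on the finite algebra of the $B_j$). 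Summing over $j$ and integrating gives $h_\mu(f,\cF^u)\le H_\nu(f_c^{-1}\xi^u_c\mid\xi^u_c)=h_\nu(f_c)$, and equality in Jensen forces $\theta\mapsto\mu^u_\theta(B_j)$ to be a.e.\ constant, i.e.\ $\pi_*\mu^u_x=\nu^u_{\pi(x)}$ on the algebra generated by $f^{-1}\xi^u$; only then does your final step (apply the same to $f^n$ and use that $\{f^{-n}\xi^u\}$ generates) conclude $\mu\in\Gibb^u_\nu(f)$. So the concavity/relative-entropy argument fiberwise over the center direction, grounded on $\pi_*\mu^{cu}_x=\nu^u_{\pi(x)}$, is the missing idea that your information-theoretic bracket cannot replace.
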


Observe that in the above theorem, we do not assume any hypothesis on the measures $\mu$ and $\nu$ to obtain the inequality.

As a corollary we may obtain the following invariance principle.

\begin{corollary} \label{exp-invariantprinciple}
Let $\mu$ and $f$ be as in Theorem \ref{u-invariantprinciple}. If all the central Lyapunov exponents of $\mu$  are non-positive almost everywhere, then $\mu$ is $u-$invariant.
\end{corollary}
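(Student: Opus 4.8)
The plan is to deduce the corollary from Theorem \ref{u-invariantprinciple} by showing that under the hypothesis (all central exponents non-positive a.e.) the entropy inequality $h_\mu(f,\mathcal F^u)\le h_\nu(f_c)$ is forced to be an equality, so that $\mu\in\Gibb^u_\nu(f)$, which is the $u$-invariance we want. So the whole problem is reduced to establishing the reverse inequality $h_\mu(f,\mathcal F^u)\ge h_\nu(f_c)$ under the exponent assumption.

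First I would compare the unstable foliation $\mathcal F^u$ of $f$ with the unstable foliation of $f_c$. Since $f$ admits global holonomies (H2) and the center foliation is compact (H1), the projection $\pi:M\to M/\mathcal F^c$ sends $\mathcal F^u$-leaves onto unstable leaves of $f_c$ and the restriction of $\pi$ to each unstable leaf of $f$ is a local homeomorphism onto an unstable leaf of $f_c$; in particular the partial entropy $h_\nu(f_c)=h_\nu(f_c,\mathcal F^u_{f_c})$ can be computed upstairs from the dynamics of $f$ along $\mathcal F^u$ modulo the center. Next I would invoke the Ledrappier–Young type machinery that underlies the paper (the notion of entropy along expanding foliations and its relation to exponents): the partial entropy $h_\mu(f,\mathcal F^u)$ decomposes, along the filtration of $\mathcal F^u$ by the further expanding center direction, as the partial entropy of the quotient dynamics $h_\nu(f_c)$ plus a non-negative contribution coming from the center direction, the latter being bounded above by the sum of positive center exponents. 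The hypothesis that all central exponents are non-positive a.e. kills this extra term, giving $h_\mu(f,\mathcal F^u)\le h_\nu(f_c)$... wait, that is the same direction; the point is rather that the center direction is \emph{non-expanding}, hence it contributes nothing to the expanding-foliation entropy in the quotient, so that the natural projection does not lose entropy: one gets $h_\mu(f,\mathcal F^u)\ge h_\nu(f_c)$ from an abstract Rokhlin-type inequality for the factor map $\pi$ restricted to unstable plaques, using that the fibers (center plaques) carry no expansion. Combining with Theorem \ref{u-invariantprinciple} forces equality and hence $\mu\in\Gibb^u_\nu(f)$, i.e. $u$-invariance.

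Concretely, the key steps in order are: (1) reduce to proving $h_\mu(f,\mathcal F^u)\ge h_\nu(f_c)$; (2) choose a measurable partition $\xi$ subordinate to $\mathcal F^u$ for $f$ whose projection $\pi\xi$ is subordinate to the unstable foliation of $f_c$, and relate the two partial entropies via $H_\mu(f^{-1}\xi\,|\,\xi)\ge H_\nu(f_c^{-1}\pi\xi\,|\,\pi\xi)$; (3) show the defect $H_\mu(f^{-1}\xi\,|\,\xi)-H_\nu(f_c^{-1}\pi\xi\,|\,\pi\xi)$ equals an integral of conditional entropies along center plaques which, by an application of Ruelle's inequality (or the Margulis–Ruelle bound) in the one-step center cocycle, is bounded by $\int\sum_i\lambda_i^c{}^+\,d\mu=0$; (4) conclude equality in Theorem \ref{u-invariantprinciple} and read off $\pi_*(\mu^u_x)=\nu^u_{\pi(x)}$, which by Proposition \ref{gibbs-uinvariant} is exactly $u$-invariance of the center conditionals.

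The main obstacle I expect is step (3): controlling the entropy lost by $\pi$ along center plaques purely in terms of the (non-positive) center exponents, i.e. making rigorous that a non-expanding factor direction cannot carry unstable-foliation entropy. This requires the correct Ledrappier–Young-style dimension/entropy formula adapted to the partial entropy $h(\cdot,\mathcal F^u)$ rather than the full entropy, and a careful choice of partitions that are simultaneously subordinate downstairs and upstairs and fine enough that the center plaques shrink under backward iteration exactly at the rate of the center exponents. If instead the paper supplies this comparison directly as a lemma, the corollary is immediate; absent that, one falls back on Ruelle's inequality applied to the induced dynamics on a center-plaque cocycle, which is the step where the hypothesis on the sign of the central exponents is actually used.
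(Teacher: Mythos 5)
Your global strategy---reduce to the reverse inequality $h_\mu(f,\mathcal F^u)\ge h_\nu(f_c)$, force equality in Theorem \ref{u-invariantprinciple}, and read off $u$-invariance via Proposition \ref{gibbs-uinvariant}---is the same as the paper's, and your steps (1) and (4) are fine. The gap is in how you get the reverse inequality. The ``Rokhlin-type factor inequality along unstable plaques'' of your step (2), $H_\mu(f^{-1}\xi\mid\xi)\ge H_\nu(f_c^{-1}\pi\xi\mid\pi\xi)$, is not available: the right-hand side is computed with the conditionals $\nu^u_{\pi(x)}$, and the identification $\pi_*\mu^u_x=\nu^u_{\pi(x)}$ is exactly the conclusion of the corollary, not an input; moreover upstairs both the partition being measured and the conditioning $\sigma$-algebra are strictly finer, so no monotonicity argument gives a one-sided comparison. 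Indeed, without the exponent hypothesis the inequality is false: Theorem \ref{u-invariantprinciple} gives $\le$ unconditionally, and for a non-rotation $f\in\SPH_1(M)$ an ergodic maximal measure with positive center exponent is not in $\Gibb^u_\nu(f)$, so for it the inequality is strict. Worse, your step (3) bounds the defect $H_\mu(f^{-1}\xi\mid\xi)-H_\nu(f_c^{-1}\pi\xi\mid\pi\xi)$ above by $\int\sum_i(\lambda^c_i)^+\,d\mu=0$, which only yields $h_\mu(f,\mathcal F^u)\le h_\nu(f_c)$, i.e.\ the direction you already have for free from Theorem \ref{u-invariantprinciple}; it cannot produce the needed $\ge$.

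What actually closes the gap---and this is where both the sign hypothesis and the $C^2$ (or Lipschitz unstable holonomy) regularity enter---is Ledrappier--Young: when all center exponents are non-positive, the Pesin unstable lamination coincides with the strong unstable foliation $\mathcal F^u$, so by Theorem (C') of \cite{LY2} one has $h_\mu(f)=h_u=h_\mu(f,\mathcal F^u)$. Then the elementary factor inequality for \emph{full} entropies, $h_\mu(f)\ge h_\nu(f_c)$ (no plaque-wise comparison is needed), gives $h_\mu(f,\mathcal F^u)\ge h_\nu(f_c)$, and Theorem \ref{u-invariantprinciple} together with Proposition \ref{gibbs-uinvariant} concludes. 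Your plaque-wise Ruelle-type comparison would in effect have to reprove this piece of Ledrappier--Young theory, and as written it does not.
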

 
Finally let us give another corollary of our main theorem which will be used in section \ref{proofhighentropy}.
\begin{corollary} \label{onedimensional}
Let $\mu$ and $f$ be as in Theorem \ref{u-invariantprinciple}. If the central foliation is one dimensional then $\mu \in Gibb^u_ {\nu}(f)$ if and only if $h_{\mu}(f) = h (\mu, \mathcal{F}^u).$
\end{corollary}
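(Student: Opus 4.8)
The plan is to derive this from Theorem~\ref{u-invariantprinciple} together with the fact that, when the center foliation is one--dimensional, it carries no entropy over the base dynamics: $h_\mu(f)=h_\nu(f_c)$ for every $f$--invariant probability $\mu$, where $\nu=\pi_*\mu$. Granting this, Theorem~\ref{u-invariantprinciple} reads $h_\mu(f,\mathcal{F}^u)\le h_\nu(f_c)$, with equality if and only if $\mu\in Gibb^u_{\nu}(f)$; replacing $h_\nu(f_c)$ by $h_\mu(f)$ gives exactly the statement of the corollary. Both implications are then immediate: if $h_\mu(f)=h_\mu(f,\mathcal{F}^u)$ then $h_\mu(f,\mathcal{F}^u)=h_\nu(f_c)$, hence $\mu\in Gibb^u_{\nu}(f)$; conversely, if $\mu\in Gibb^u_{\nu}(f)$ then $h_\mu(f,\mathcal{F}^u)=h_\nu(f_c)=h_\mu(f)$.

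It remains to justify $h_\mu(f)=h_\nu(f_c)$. Since $f_c$ is a topological factor of $f$ through $\pi$ (hypothesis H3), one has $h_\nu(f_c)\le h_\mu(f)$ in general, and by the Abramov--Rokhlin formula the gap is the relative entropy $h_\mu(f\mid\pi):=h_\mu(f)-h_\nu(f_c)\ge 0$. Because $\pi$ is simultaneously a factor map from $f$ onto $f_c$ and from $f^{-1}$ onto $f_c^{-1}$, and the entropies of $f,f_c$ are insensitive to inversion, we have $h_\mu(f\mid\pi)=h_\mu(f^{-1}\mid\pi)$. Next reduce to the ergodic case: for the ergodic decomposition $\mu=\int\mu_\omega\,d\omega$ the measures $\pi_*\mu_\omega$ form the ergodic decomposition of $\nu$, so by affinity of the entropy $h_\mu(f\mid\pi)=\int h_{\mu_\omega}(f\mid\pi)\,d\omega$ and it suffices to take $\mu$ ergodic. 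Then $\lambda^c(\mu)$ is a constant; the fibers of $\pi$ are the compact one--dimensional center leaves (circles), so the fiberwise derivative cocycle is the one--dimensional cocycle $Df|_{E^c}$, with exponent $\lambda^c$. By the relative Margulis--Ruelle inequality, $h_\mu(f\mid\pi)\le\int\max\{\lambda^c,0\}\,d\mu$ and $h_\mu(f^{-1}\mid\pi)\le\int\max\{-\lambda^c,0\}\,d\mu$. If $\lambda^c(\mu)\ge 0$ the second inequality forces $h_\mu(f\mid\pi)=h_\mu(f^{-1}\mid\pi)=0$; if $\lambda^c(\mu)\le 0$ the first forces $h_\mu(f\mid\pi)=0$. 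Hence $h_\mu(f\mid\pi)=0$ for ergodic $\mu$, and integrating over ergodic components yields $h_\mu(f)=h_\nu(f_c)$ for arbitrary invariant $\mu$.

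The only nontrivial ingredient is thus the vanishing of the relative entropy over the one--dimensional center, and the point that makes it work is that an ergodic measure has a single center exponent, so the relative Ruelle inequality can be applied to whichever of $f$ or $f^{-1}$ has non-positive fiberwise exponent; for non-ergodic $\mu$ one genuinely needs to pass through the ergodic decomposition first, since the sign information is lost after averaging. Everything else is bookkeeping with Theorem~\ref{u-invariantprinciple}, the Abramov--Rokhlin formula and affinity of entropy: in particular no ergodic decomposition of $h_\mu(f,\mathcal{F}^u)$ or of the property $Gibb^u_{\nu}$ is needed, because Theorem~\ref{u-invariantprinciple} is already stated for an arbitrary invariant measure. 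One should only verify that $M/\mathcal{F}^c$ is a genuine compact quotient on which $f_c$ acts (ensured by H1 and H3), so that the relative entropy and the Abramov--Rokhlin formula are meaningful.
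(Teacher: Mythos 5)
Your proposal is correct, and its top-level structure is the same as the paper's: everything reduces to the identity $h_{\hat\mu}(f)=h_{\pi_*\hat\mu}(f_c)$ for invariant measures when the center foliation is one dimensional, after which Theorem \ref{u-invariantprinciple} gives both implications exactly as you say. Where you genuinely diverge is in the proof of that identity. The paper argues topologically: by the Ledrappier--Walters relativised variational principle, $\sup_{\hat\mu:\,\pi_*\hat\mu=\nu}h_{\hat\mu}(f)=h_\nu(f_c)+\int h(f,\pi^{-1}(y))\,d\nu(y)$, and the fiber term vanishes because each fiber is a circle whose iterates are center leaves of uniformly bounded length, so the fiberwise topological entropy is zero; combined with $h_{\hat\mu}(f)\ge h_\nu(f_c)$ this gives the identity for every invariant measure at once. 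You instead go through the Abramov--Rokhlin formula, the symmetry $h_\mu(f\mid\pi)=h_\mu(f^{-1}\mid\pi)$, the ergodic decomposition, and a relative (fiberwise) Margulis--Ruelle inequality applied to whichever of $f$, $f^{-1}$ has non-positive center exponent. This works, but note the trade-off: the paper's argument is soft --- no smoothness of the fiber maps, no Lyapunov exponents, no ergodic decomposition, valid in low regularity --- whereas your argument needs the fiber maps to be at least $C^1$ and needs the relative Ruelle inequality for bundle skew products (available in the random/fibered dynamics literature, e.g.\ Bahnm\"uller--Bogensch\"utz), which is the heaviest ingredient in your proof and deserves a precise citation together with the identification of the fiberwise exponent with $\lambda^c$. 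In exchange, your route is more flexible in one respect: it generalizes to higher-dimensional compact center fibers, yielding $h_\mu(f)-h_\nu(f_c)\le\int\sum_i(\lambda^{c,i})^{+}\,d\mu$, where the paper's bounded-length argument is genuinely one-dimensional. The remaining bookkeeping (affinity of entropy over the ergodic decomposition, the pushforward of the decomposition being a decomposition of $\nu$, constancy of $\lambda^c$ for ergodic measures, and the fact that Theorem \ref{u-invariantprinciple} already covers non-ergodic $\mu$) is all fine.
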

We also add another corollary which is a bit more technical, however useful in the development of the results using the invariance principle. Let $M, N$ be compact manifolds, 
$$
f: M \times N \rightarrow M \times N,  (x, \theta) \rightarrow (A(x), f_{x}(\theta)),
$$
where $A$ is an Anosov diffeomorphism. Fix any  $A-$invariant probability $\nu.$  

\begin{corollary} \label{invariancelimit}
Let $f_n$ be as above and $\mu_n,$ $f_n-$invariant such that $f_n \rightarrow f, \mu_n \rightarrow \mu.$ If $\mu_n \in Gibb^u_{\nu}(f_n)$ then $\mu \in Gibb^u_{\nu}(f).$  
\end{corollary}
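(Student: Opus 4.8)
The plan is to reduce the statement to a closed-graph-type argument using the entropy characterization from Theorem \ref{u-invariantprinciple}. Recall that by that theorem, for each $n$ the hypothesis $\mu_n \in \Gibb^u_\nu(f_n)$ is equivalent to the equality $h_{\mu_n}(f_n,\cF^u) = h_\nu(f_{n,c})$; since the base dynamics here is the fixed Anosov diffeomorphism $A$ with $(f_n)_c = A$ for all $n$ (the fiber maps do not affect the quotient), the right-hand side is the constant $h_\nu(A)$, independent of $n$. Thus the whole content is: $h_{\mu_n}(f_n,\cF^u) = h_\nu(A)$ for all $n$ together with $f_n\to f$, $\mu_n\to\mu$ should force $h_\mu(f,\cF^u) = h_\nu(A)$, which by Theorem \ref{u-invariantprinciple} applied to the limit is exactly $\mu\in\Gibb^u_\nu(f)$.

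The key step, then, is a semicontinuity property of partial unstable entropy under joint perturbation of the dynamics and the measure. First I would note that Theorem \ref{u-invariantprinciple} always gives the inequality $h_\mu(f,\cF^u)\le h_\nu(A)$, so only the reverse inequality (lower semicontinuity of $(f_n,\mu_n)\mapsto h_{\mu_n}(f_n,\cF^u)$ along this convergent sequence) needs to be established. Here I would exploit that the unstable foliations $\cF^u_{f_n}$ converge to $\cF^u_f$ (continuous dependence of the strong unstable foliation of a partially hyperbolic system on the diffeomorphism in the $C^1$ topology), so that a fixed measurable partition subordinate to $\cF^u_f$, suitably thickened, can be used as an "almost subordinate" partition for all large $n$; one then compares the conditional entropies $H_{\mu_n}(f_n^{-1}\mathcal{P}\mid\mathcal{P})$ and passes to the limit using weak-$*$ convergence of $\mu_n$ on the relevant $\sigma$-algebras, in the spirit of the upper/lower semicontinuity arguments for Kolmogorov-Sinai entropy restricted to a fixed partition.

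The main obstacle I anticipate is precisely controlling the partial entropy along a \emph{moving} foliation: unlike ordinary metric entropy, $h_\mu(f,\cF^u)$ is defined through measurable partitions subordinate to $\cF^u$, and these partitions must change with $f_n$. The technical heart is therefore to show that one can choose these partitions coherently — e.g. built from a fixed finite cover of $M$ intersected with local unstable plaques — so that their geometry (diameters, the way $f_n$ refines them) is uniform in $n$, and so that the entropy functional is continuous in the pair $(f_n,\mu_n)$ restricted to such partitions. An alternative, possibly cleaner route would be to avoid partition bookkeeping entirely: use the $\Gibb^u$ description directly, i.e. $\pi_*(\mu_{n,x}^u) = \nu^u_{\pi(x)}$, and argue that disintegration of $\mu_n$ along $\cF^u_{f_n}$ varies continuously (in an appropriate sense) with $(f_n,\mu_n)$ because the holonomies and the plaques do; then the defining identity passes to the limit. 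I would try this disintegration approach first and fall back on the entropy-semicontinuity argument if the measurable-partition limits prove awkward.
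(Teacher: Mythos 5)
Your argument is essentially the paper's proof: apply Theorem~\ref{u-invariantprinciple} to each $(f_n,\mu_n)$ to get $h_{\mu_n}(f_n,\cF^u_n)=h_\nu(A)$, use the same theorem for the inequality $h_\mu(f,\cF^u)\le h_\nu(A)$ at the limit, and close with semicontinuity of the partial unstable entropy under $(f_n,\mu_n)\to(f,\mu)$. The step you single out as the technical heart is precisely the upper semicontinuity result the paper quotes from \cite{Y} (cf.\ Proposition~\ref{p.uppersemicontinuous}), so it can simply be cited rather than reproved --- note it is \emph{upper}, not lower, semicontinuity of $(g,\omega)\mapsto h_\omega(g,\cF^u_g)$ --- whereas your fallback via ``continuity of disintegrations'' is riskier, since conditional measures need not pass to weak-$*$ limits without a product-structure argument of the kind used in Proposition~\ref{p.Gibbs u state}.
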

We thank an anonymous referee to point out this corollary of our main result.
\subsection{Rigidity of high entropy measures in partial hyperbolic setting}

Let $M$ be a  smooth manifold. Denote by $\SPH_1(M)$ the set of
$C^2$ partially hyperbolic diffeomorphism $f$ on $M$ with {\bf one-dimensional}
central bundle satisfying hypothesis (H1), (H2) and (H3) plus accessibility property.

We remark that if $M$ is a closed orientable 3-manifold and $f$ is partially hyperbolic with compact center manifolds and $E^s, E^c$ and $E^u$ are orientable then $f_c$ is conjugate to a hyperbolic toral automorphism (See Theorem 3 in \cite{HHTU}) .

A special class of partially hyperbolic diffeomorphisms in $\SPH_1(M)$ are those which we denote by rotation type.
$f\in\SPH_1(M)$ is of \emph{rotation type} if there exists an isometric continuous action of $\mathbb{S}^1$ into $M$ which commutes with $f.$ That is, $\rho_{\theta} : M \rightarrow M, \theta \in \mathbb{S}^1$ with $f \circ \rho_{\theta} =\rho_{\theta} \circ f.$ Any rotation type partially hyperbolic diffeomorphism admits a unique measure of maximal entropy and its center Lyapunov exponent vanishes almost everywhere. This is the non-generic case (in $SPH_1(M)$) where the unique measure of maximal entropy is non-hyperbolic (see theorem \ref{dichotomy}).  It has been shown in \cite{HHTU} that for every $f\in \SPH_1(M)$ which is not rotation type, $f$ admits only
finitely (strictly larger than one) many ergodic maximal measures $\mu_1^+,\cdots, \mu_{k(+)}^+$ and $\mu_1^-,\cdots, \mu_{k(-)}^-$,
where $\mu_i^+$ has center exponent positive for $1\leq i \leq k(+)$, and $\mu_i^-$ has negative
center exponent for $1\leq i \leq k(-)$. 

\begin{theorem} \label{dichotomy} \cite{HHTU}
Suppose $f\in \SPH_1(M)$, then $f$ admits finitely many ergodic measures of maximal entropy. There are two possibilities:
\begin{enumerate}
\item $f$ is rotation type and has a unique entropy maximizing measure $\mu$. The central Lyapunov exponent $\lambda_c(\mu)$ vanishes and $(f, \mu)$ is isomorphic to a Bernoulli shift,

\item  $f$ has more than one ergodic entropy maximizing measure, all of which with non vanishing central Lyapunov exponent. The central Lyapunov exponent $\lambda_c(\mu)$ is nonzero and $(f, \mu)$ is a finite extension of a Bernoulli shift for any such measure $\mu.$ Some of these measures have positive central exponent and some have negative central exponent. \end{enumerate}

\end{theorem}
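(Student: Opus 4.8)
The plan is to bootstrap from Theorem \ref{u-invariantprinciple} and Corollaries \ref{exp-invariantprinciple} and \ref{onedimensional}, together with the Abramov--Rokhlin formula, the Ruelle inequality applied in the one-dimensional fibres, and the accessibility hypothesis. First I would record two facts valid for every ergodic $f$-invariant probability $\mu$, writing $\nu=\pi_*\mu$. (a) Since $\pi$ semiconjugates $f$ to $f_c$ with one-dimensional compact fibres and $f$ is $C^2$, the restriction of $f$ to center leaves is a cocycle of $C^2$ circle diffeomorphisms over $(f_c,\nu)$; by Abramov--Rokhlin $h_\mu(f)=h_\nu(f_c)+h_\mu(f\mid\pi)$, and the relative Ruelle inequality gives $h_\mu(f\mid\pi)\le\lambda_c^+(\mu)$, while the same applied to $f^{-1}$ (for which $\pi$ semiconjugates to $f_c^{-1}$, with $h_\nu(f_c^{-1})=h_\nu(f_c)$ and fibre exponent $-\lambda_c(\mu)$) gives $h_\mu(f\mid\pi)\le\lambda_c^-(\mu)$; since $\lambda_c^+(\mu)\,\lambda_c^-(\mu)=0$ we obtain $h_\mu(f)=h_\nu(f_c)$. (b) Being a transitive topological Anosov homeomorphism, $f_c$ has a unique measure of maximal entropy $\nu_0$ (which is Bernoulli); lifting $\nu_0$ along the global unstable holonomies to a $u$-state and using the equality case of Theorem \ref{u-invariantprinciple} yields an $f$-invariant $\mu$ with $h_\mu(f,\mathcal F^u)=h_{\nu_0}(f_c)$, whence by (a) and $h_\mu(f)\ge h_\mu(f,\mathcal F^u)$ we get $h_{\rm top}(f)=h_{\nu_0}(f_c)$, and every ergodic m.m.e.\ $\mu$ of $f$ satisfies $\pi_*\mu=\nu_0$.

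Next I would sort the ergodic measures by the sign of $\lambda_c$. If $\lambda_c(\mu)\le 0$, Corollary \ref{exp-invariantprinciple} gives $\mu\in Gibb^u_{\nu_0}(f)$, i.e.\ $\mu$ is $u$-invariant; applying the same corollary to $f^{-1}$, $\lambda_c(\mu)\ge 0$ forces $\mu$ to be $s$-invariant. In particular every ergodic invariant measure with $\lambda_c=0$ is simultaneously $u$- and $s$-invariant, and for ergodic m.m.e.'s this is consistent with Corollary \ref{onedimensional} since $h_\mu(f)=h_{\nu_0}(f_c)$ by (a).

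Now the rotation dichotomy. Suppose some ergodic invariant measure $\mu$ has $\lambda_c(\mu)=0$; by the previous step $\mu$ is $su$-invariant. Then $\supp\mu$ is nonempty, closed, and saturated by both stable and unstable leaves, hence equals $M$ by accessibility; the same argument applied to the (upper semicontinuous) atomic part of the center disintegration shows the conditionals $\{\mu^c_x\}$ have no atoms and full support on the center circles. Their cumulative distribution functions then give fibred coordinates in which $f$ and all stable and unstable holonomies act as rotations of $\mathbb{S}^1$; the corresponding ``add $\theta$'' maps do not depend on the choices involved and assemble into a continuous $\mathbb{S}^1$-action commuting with $f$, which is isometric for an averaged metric, so $f$ is of rotation type. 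In this case the Haar lift $m$ of $\nu_0$ is $f$-invariant with $h_m(f)=h_{\nu_0}(f_c)=h_{\rm top}(f)$ and $\lambda_c(m)=0$; the circle action permutes the ergodic m.m.e.'s and, being connected, fixes each of them, which forces uniqueness of the m.m.e.; and $(f,m)$, an ergodic compact-group extension of the Bernoulli system $(f_c,\nu_0)$, is weakly mixing --- otherwise $e^{2\pi i n\tau}$ would be an $f_c$-coboundary for some $n\neq 0$, yielding a proper $su$-saturated $f$-invariant set and contradicting accessibility --- hence Bernoulli by Rudolph's theorem.

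Finally, if $f$ is not of rotation type, the previous paragraph shows no ergodic invariant measure has zero center exponent, so every ergodic m.m.e.\ is hyperbolic: $u$-invariant with $\lambda_c<0$, or $s$-invariant with $\lambda_c>0$. The $u$-state and $s$-state lifts of $\nu_0$ give ergodic m.m.e.'s of each type (neither can be $su$-invariant, or we would be in the rotation case), so both signs occur and there are at least two. For an ergodic m.m.e.\ $\mu$ with $\lambda_c(\mu)<0$ the center fibre process is an on-average contracting cocycle of circle diffeomorphisms, so a Furstenberg-type argument forces $\{\mu^c_x\}$ to be purely atomic with a constant finite number of equal-mass atoms; hence $\mu$ is carried by a compact $f$-invariant, $u$-saturated set meeting each center leaf in finitely many points, and $(f,\mu)$ is a finite-to-one extension of the Bernoulli system $(f_c,\nu_0)$, i.e.\ a finite extension of a Bernoulli shift. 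What remains is to show that there are only finitely many such ``horizontal hyperbolic'' sets; I expect this --- an upper semicontinuity/compactness argument preventing their accumulation, as in \cite{HHTU} --- together with the regularity of the center disintegration needed to build the circle action, to be the main difficulties, the rest being a combination of Theorem \ref{u-invariantprinciple}, its corollaries, and standard ergodic theory.
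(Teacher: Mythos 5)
This statement is not proved in the paper at all: it is quoted verbatim from \cite{HHTU}, so there is no internal argument to compare yours with. Your outline does follow the general strategy of \cite{HHTU} (Ledrappier--Walters to force every maximal measure to project to the unique maximal measure $\nu_0$ of $f_c$, the invariance principle to sort measures by the sign of $\lambda_c$, accessibility plus holonomy-invariant center disintegrations to build the circle action in the zero-exponent case, and atomic disintegrations to get finite extensions of a Bernoulli base in the hyperbolic case), so as a road map it is sensible.

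However, there is a genuine gap in the rotation-type step. You apply the upgrade ``$su$-invariant disintegration $\Rightarrow$ continuous, atomless, fully supported conditionals $\Rightarrow$ circle action'' to an arbitrary ergodic invariant measure with $\lambda_c=0$. Measurable $su$-invariance alone does not make $\supp\mu$ stable/unstable saturated, nor does it give continuity of $x\mapsto\mu^c_x$; the Hopf-type argument of Avila--Viana (Theorem D, Proposition 4.8 in \cite{AV}), which produces the everywhere-defined continuous holonomy-invariant family, requires $\pi_*\mu$ to have local product structure. That holds when $\mu$ is a measure of maximal entropy (then $\pi_*\mu=\nu_0$), but not for a general ergodic measure, and indeed your intermediate conclusion ``if $f$ is not of rotation type then no ergodic invariant measure has zero center exponent'' is false: the Bochi--Bonatti--D\'iaz result quoted in Section 3 of this very paper produces, for an open and dense set of non-rotation-type $f\in\SPH_1(M)\cap RT(M)$, ergodic measures with positive entropy and vanishing center exponent. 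The dichotomy must therefore be run only on maximal measures. Beyond this, the two items you defer --- finiteness of the number of ergodic maximal measures in the non-rotation case (together with the existence of both signs), and the actual construction of the commuting isometric $\mathbb{S}^1$-action and the Bernoulli/finite-extension structure --- are precisely the substantive content of \cite{HHTU}, so as it stands the proposal is an outline that reduces the theorem to the cited reference rather than a proof of it.
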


A more precise description of the ergodic maximal measures of the latter situation of the last theorem was obtained in \cite{UVY}.
The complement of rotation type diffeomorphisms is $C^1$ open and $C^{\infty}-$dense,  where we prove that high entropy
ergodic measures are hyperbolic.
\begin{main}\label{main.nonunihyp}

Suppose $f\in\SPH_1(M)$ is not rotation type, then there is $\vep >0$ and $\lambda_0>0$ such
that for every ergodic invariant probability measure $\mu$ of $f$ with entropy larger
than $h_{top}(f)-\vep$, its center exponent satisfies $|\lambda^c(\mu)|>\lambda_0$.
\end{main}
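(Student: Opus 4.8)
I would argue by contradiction: a failure of the conclusion will be shown to produce an $f$-invariant probability which is simultaneously $u$- and $s$-invariant, and then accessibility will force $f$ to be of rotation type, contradicting the hypothesis. So suppose the statement fails; then for each $n$ there is an ergodic probability $\mu_n$ with $h_{\mu_n}(f)>h_{top}(f)-1/n$ and $|\lambda^c(\mu_n)|\le 1/n$. Passing to a subsequence, assume $\mu_n\to\mu$ weakly-$*$ and set $\nu_n=\pi_*\mu_n\to\nu:=\pi_*\mu$. Since $x\mapsto\log\|Df_x|_{E^c}\|$ is continuous and $E^c$ is one-dimensional, $\int\log\|Df|_{E^c}\|\,d\mu=\lim_n\lambda^c(\mu_n)=0$; but $\mu$ need not be ergodic, so this alone does \emph{not} pin down the sign of the center exponent $\mu$-almost everywhere, which is precisely why the entropy hypothesis must enter.

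The first step is to show $\mu\in Gibb^u_{\nu}(f)\cap Gibb^s_{\nu}(f)$. By the Ledrappier--Young formula for a one-dimensional center (see \cite{LY,Y}), $h_{\mu_n}(f)\le h_{\mu_n}(f,\cF^u)+\max\{\lambda^c(\mu_n),0\}$, and the same applied to $f^{-1}$ gives $h_{\mu_n}(f)\le h_{\mu_n}(f^{-1},\cF^s)+\max\{-\lambda^c(\mu_n),0\}$. Both corrections are $\le|\lambda^c(\mu_n)|\to 0$, so $h_{\mu_n}(f,\cF^u)$ and $h_{\mu_n}(f^{-1},\cF^s)$ both converge to $h_{top}(f)=h_{top}(f_c)$. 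Using the upper semicontinuity of partial (un)stable entropy in the measure \cite{Y}, we get $h_\mu(f,\cF^u)\ge h_{top}(f_c)$ and $h_\mu(f^{-1},\cF^s)\ge h_{top}(f_c)$, while Theorem \ref{u-invariantprinciple} applied to $f$ and to $f^{-1}$ gives the reverse bounds $h_\mu(f,\cF^u)\le h_\nu(f_c)\le h_{top}(f_c)$ and the stable analogue. Hence equality holds throughout: $\nu$ is a measure of maximal entropy of the transitive Anosov homeomorphism $f_c$, so $\nu=\nu_0$ is its unique such measure (cf.\ \cite{HHTU}), and the equality clause of Theorem \ref{u-invariantprinciple} yields $\mu\in Gibb^u_{\nu_0}(f)\cap Gibb^s_{\nu_0}(f)$. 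In particular $h_\mu(f)\ge h_\mu(f,\cF^u)=h_{top}(f)$, so $\mu$ is itself a measure of maximal entropy of $f$.

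The remaining and hardest task is to convert accessibility plus this double invariance into rotation type. Write $\{\mu^c_x\}$ for the disintegration of $\mu$ along $\cF^c$; the two $Gibb$ conditions say exactly that $\{\mu^c_x\}$ is invariant under the stable and the unstable holonomies between center leaves (global, by H2). Since $\nu_0$ has fully supported unstable conditionals and $\mu\in Gibb^u_{\nu_0}(f)$, the set $\supp\mu$ is closed, $f$-invariant and $\cF^u$-saturated, and symmetrically $\cF^s$-saturated, so $\supp\mu=M$ by accessibility. By the invariance-principle regularity mechanism (in the spirit of \cite{AV}; here the fibered structure and the compactness of the center leaves are used), $\{\mu^c_x\}$ has a continuous version with $h_*\mu^c_x=\mu^c_{h(x)}$ for every stable or unstable holonomy $h$ between center leaves and $f_*\mu^c_x=\mu^c_{fx}$. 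Given two points $p,q$ on a common center leaf $\cF^c(x)$, accessibility provides an $su$-path from $p$ to $q$ whose holonomy is a loop $\Psi\colon\cF^c(x)\to\cF^c(x)$ with $\Psi_*\mu^c_x=\mu^c_x$ and $\Psi(p)=q$. Applying this with $p\in\supp\mu^c_x$ (possible for some $x$, since $\supp\mu=M$) gives $\supp\mu^c_x=\cF^c(x)$, which then propagates to all center leaves by holonomy-invariance; applying it with $p$ an atom of maximal mass --- of which there are finitely many, whereas the circle is infinite --- shows that no $\mu^c_x$ has an atom. Thus $\{\mu^c_x\}$ is a continuous, $f$-equivariant, holonomy-invariant family of non-atomic probabilities of full support on the center circles. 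Its cumulative distribution function identifies each $\cF^c(x)$ with $\mathbb{S}^1$ so that every $su$-holonomy and $f$ act as rotations; setting $\rho_\theta(x)$ to be the point of $\cF^c(x)$ at oriented $\mu^c_x$-mass $\theta$ ahead of $x$ then defines a continuous $\mathbb{S}^1$-action on $M$ commuting with $f$ and isometric for an $\mathbb{S}^1$-averaged metric (after replacing $f$ by $f^2$ and/or passing to an orientation double cover if needed). Hence $f$ is of rotation type, the desired contradiction. Alternatively, one may observe that $\mu$ is a $bs$-invariant measure of maximal entropy, which is impossible for non-rotation-type $f$ by \cite{HHTU,UVY}, whose maximal measures have atomic --- hence, under accessibility, non-$bs$-invariant --- center conditionals.

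The two soft-analysis ingredients, namely the Ledrappier--Young inequality and the upper semicontinuity of partial (un)stable entropy, are what let the entropy defect $h_{\mu_n}(f)-h_{\mu_n}(f,\cF^u)$ be squeezed by $|\lambda^c(\mu_n)|$ and survive to the limit. I expect the real obstacle to be the last paragraph: producing a continuous, fully supported, holonomy-invariant family of center conditionals out of an almost-everywhere statement and turning it into the $\mathbb{S}^1$-action. This is the only point where the hypothesis ``$f$ is not of rotation type'' is used, and it is where the invariance principle is ultimately cashed in.
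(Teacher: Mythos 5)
Your argument is correct in substance, but it takes a genuinely different route from the paper. The paper proves Theorem~\ref{main.nonunihyp} as a corollary of Theorem~\ref{main.converging}: after reducing (without loss of generality) to a subsequence with $\lambda^c(\mu_n)\le 0$, it uses only the \emph{equality} case of Ledrappier--Young ($h_{\mu_n}(f)=h_{\mu_n}(f,\cF^u)$ when center exponents are non-positive), the upper semicontinuity of partial entropy from \cite{Y}, and the equality clause of Theorem~\ref{u-invariantprinciple} to place the limit $\mu$ in $\Gibb^u_\nu(f)$; it then proves the structural lemma $\Gibb^u_\nu(f)=V^-$ (convex combinations of $\mu^-_1,\dots,\mu^-_{k(-)}$) by combining Propositions~\ref{p.Gibbs u state} and~\ref{p.invariant principle} with the dichotomy of \cite{HHTU}, and obtains the contradiction from continuity of the center exponent: $\lambda^c(\mu)=\lim_n\lambda^c(\mu_n)=0$, whereas any combination of the $\mu^-_i$ has $|\lambda^c|\ge\min_i|\lambda^c(\mu^-_i)|>0$. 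You bypass the sign split and Theorem~\ref{main.converging} entirely: the two-sided Ledrappier--Young defect bound $h_{\mu_n}(f)\le h_{\mu_n}(f,\cF^u)+\max\{\lambda^c(\mu_n),0\}$ (and its stable counterpart for $f^{-1}$) squeezes both partial entropies to $h_{top}(f_c)$, so the limit lies in $\Gibb^u_{\nu}(f)\cap\Gibb^s_{\nu}(f)$ with $\nu$ the unique maximal measure of $f_c$, and you contradict ``not rotation type'' through the statement that accessibility plus doubly holonomy-invariant center conditionals forces rotation type --- which is precisely the Corollary in Section~\ref{proofhighentropy} of the paper, there dispatched by citing \cite{AV}, and which you partly re-derive (continuity, full support, non-atomicity, construction of the circle action). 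What each approach buys: the paper's route needs only the easier equality case of Ledrappier--Young but leans heavily on the \cite{HHTU} classification of maximal measures and yields the stronger limit statement of Theorem~\ref{main.converging}; yours needs the (standard, but stronger) Ledrappier--Young inequality for a one-dimensional center from \cite{LY2} and concentrates all the difficulty in the accessibility/rotation-type step, where the one genuine wrinkle is the orientation/commutation issue you yourself flag (if $f$ reverses center orientation the constructed $\mathbb{S}^1$-action commutes only with $f^2$); that wrinkle is safely avoided by your alternative of simply invoking the paper's Corollary or the results of \cite{HHTU,UVY}, so it does not constitute a gap.
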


Theorem~\ref{main.nonunihyp} is a corollary of the following main result.

\begin{main}\label{main.converging}
Let $f\in \SPH_1 (M)$ which is not a rotation type, and $\{\mu_n\}_{n=1}^\infty$ be a sequence of
ergodic probability measure of $f$ such that $\lim_{n\to \infty} h_{\mu_n}(f)=h_{top}(f)$.
Suppose $\mu_n$ converges to $\mu$ in the weak-* topology and all $\mu_n$  have non-positive center
exponent, then $\mu$ is a combination of $\mu^-_1,\cdots, \mu^-_{k(-)}$.
\end{main}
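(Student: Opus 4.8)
The plan is to push the hypothesis through the invariance principle along $\mathcal{F}^u$ down to the Anosov base $f_c$, identify the limiting base measure, deduce that $\mu$ is a measure of maximal entropy, and then use the $\mathcal{F}^u$-entropy identity together with the affinity of entropy to eliminate the contributions of the ergodic maximal measures with nonzero center exponent. First I would note that, since each $\mu_n$ has non-positive center exponent, Corollary~\ref{exp-invariantprinciple} gives $\mu_n\in Gibb^u_{\nu_n}(f)$ with $\nu_n:=\pi_*\mu_n$; because $E^c$ is one dimensional, Corollary~\ref{onedimensional} yields $h_{\mu_n}(f)=h(\mu_n,\mathcal{F}^u)$, and since $\mu_n$ is $u$-invariant the equality case of Theorem~\ref{u-invariantprinciple} gives $h(\mu_n,\mathcal{F}^u)=h_{\nu_n}(f_c)$. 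Thus $h_{\nu_n}(f_c)=h_{\mu_n}(f)$ for every $n$. As $h_{\nu_n}(f_c)\le h_{top}(f_c)\le h_{top}(f)$ (the last inequality because any $f$-invariant lift of a maximal measure of $f_c$ has at least that entropy, $\pi$ being a factor map) and $h_{\mu_n}(f)\to h_{top}(f)$, all these quantities agree in the limit; in particular $h_{top}(f)=h_{top}(f_c)$ and $h_{\nu_n}(f_c)\to h_{top}(f_c)$. Now $f_c$ is a transitive topological Anosov homeomorphism, hence expansive, so its entropy map is upper semicontinuous and it has a unique measure of maximal entropy $\nu_0$; therefore every weak-$*$ accumulation point of $(\nu_n)$ equals $\nu_0$, and since $\nu_n=\pi_*\mu_n\to\pi_*\mu$ we get $\pi_*\mu=\nu_0$. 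Hence $h_\mu(f)\ge h_{\pi_*\mu}(f_c)=h_{top}(f_c)=h_{top}(f)$, so $\mu$ is a measure of maximal entropy; as the ergodic components of a maximal measure are themselves maximal and $f$ is not of rotation type, Theorem~\ref{dichotomy} lets me write $\mu=\sum_i c_i\,\mu_i^+ + \sum_j d_j\,\mu_j^-$ as a convex combination of the finitely many ergodic maximal measures (note also $\lambda^c(\mu)=\lim_n\lambda^c(\mu_n)\le 0$, by weak-$*$ continuity of the center exponent).

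Next I would show that $c_i=0$ for all $i$. Since $\mu_n\in Gibb^u_{\nu_n}(f)$, $\mu_n\to\mu$ and $\nu_n\to\nu_0$, passing the $u$-holonomy invariance of the center conditionals from $\mu_n$ to $\mu$ — the mechanism behind Corollary~\ref{invariancelimit} — shows that $\mu$ is $u$-invariant; together with $\pi_*\mu=\nu_0$ this gives $\mu\in Gibb^u_{\nu_0}(f)$, so $h_\mu(f)=h(\mu,\mathcal{F}^u)$ by Corollary~\ref{onedimensional}. Both $\mu\mapsto h_\mu(f)$ and $\mu\mapsto h(\mu,\mathcal{F}^u)$ are affine on invariant measures, hence
\[
\sum_i c_i\bigl(h_{\mu_i^+}(f)-h(\mu_i^+,\mathcal{F}^u)\bigr)=\sum_j d_j\bigl(h(\mu_j^-,\mathcal{F}^u)-h_{\mu_j^-}(f)\bigr).
\]
The right-hand side vanishes: $\lambda^c(\mu_j^-)<0$, so $\mu_j^-$ is $u$-invariant by Corollary~\ref{exp-invariantprinciple} and $h_{\mu_j^-}(f)=h(\mu_j^-,\mathcal{F}^u)$ by Corollary~\ref{onedimensional}. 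On the left, each term is $\ge 0$, and I claim it is strictly positive: applying Corollary~\ref{exp-invariantprinciple} to $f^{-1}$ (which again lies in $\SPH_1(M)$, with the stable and unstable bundles exchanged, and is not of rotation type) shows $\mu_i^+$ is $s$-invariant, so if it were also $u$-invariant its center conditionals would be invariant under all stable and unstable holonomies, and accessibility of $f$ would then force $f$ to be of rotation type, contrary to hypothesis; hence $\mu_i^+\notin Gibb^u_{\nu_0}(f)$ and $h_{\mu_i^+}(f)>h(\mu_i^+,\mathcal{F}^u)$ by Corollary~\ref{onedimensional}. Therefore $c_i=0$ for all $i$ and $\mu=\sum_j d_j\,\mu_j^-$, as desired.

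The main obstacle is this last step. First, Corollary~\ref{invariancelimit} is stated for a fixed base measure, so one must check that $u$-holonomy invariance of the center conditionals still passes to weak-$*$ limits when the base measures only converge to $\nu_0$; I expect this to be a routine adaptation of its proof. The substantive point is the strict inequality $h_{\mu_i^+}(f)>h(\mu_i^+,\mathcal{F}^u)$, i.e.\ the assertion that a maximal measure with nonzero center exponent cannot be $u$-invariant. This is exactly the rigidity ``invariance under both stable and unstable holonomies, together with accessibility, forces rotation type'', and it is the place where accessibility is used in an essential way. Equivalently, via the Ledrappier-Young formula $h_{\mu_i^+}(f)=h(\mu_i^+,\mathcal{F}^u)+\lambda^c(\mu_i^+)\,\delta^c$, where $\delta^c\in[0,1]$ is the transverse dimension of $\mu_i^+$ along $\mathcal{F}^c$ (see \cite{LY}), the required inequality is the statement $\delta^c>0$, which again rests on the same rigidity input.
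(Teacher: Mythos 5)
Your argument is correct and turns on the same pivots as the paper's proof --- the entropy criterion of Theorem~\ref{u-invariantprinciple}, the dichotomy of Theorem~\ref{dichotomy}, and the rigidity statement that a measure lying in $\Gibb^u_\nu(f)\cap\Gibb^s_\nu(f)$ forces rotation type (the corollary following Proposition~\ref{p.invariant principle}) --- but the intermediate mechanics differ in two places. Where the paper works entirely upstairs, using upper semicontinuity of the partial entropy $h_{\cdot}(f,\cF^u)$ (Proposition~\ref{p.uppersemicontinuous}) to get $h_\mu(f,\cF^u)=h_{top}(f)=h_\nu(f_c)$ and hence $\mu\in\Gibb^u_\nu(f)$ in one stroke, you first descend to the base, using expansiveness of $f_c$ and uniqueness of its maximal measure to identify $\pi_*\mu=\nu$ (your $\nu_0$) and to see that $\mu$ is a measure of maximal entropy. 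And where the paper concludes with the structural lemma $\Gibb^u_\nu(f)=V^-$, proved through Proposition~\ref{p.Gibbs u state}(b) (almost every ergodic component of a $\Gibb^u_\nu$ measure is again in $\Gibb^u_\nu$), you eliminate the $\mu_i^+$ by affinity of $h_{\cdot}(f)$ and of $h(\cdot,\cF^u)$ together with the strict gap $h_{\mu_i^+}(f)>h(\mu_i^+,\cF^u)$; that strict gap rests on exactly the same rigidity input as the paper's (via Corollary~\ref{onedimensional}, $s$-invariance of $\mu_i^+$ from Corollary~\ref{exp-invariantprinciple} applied to $f^{-1}$, accessibility and the local product structure of $\nu$). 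One word is needed on the affinity of $\mu\mapsto h(\mu,\cF^u)$ over the ergodic decomposition: it holds because $\xi^u$ refines the partition into ergodic components, which is precisely the fact the paper invokes in the proof of Proposition~\ref{p.Gibbs u state}(b).

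The step you flag --- passing $\mu_n\in\Gibb^u_{\nu_n}(f)$ to $\mu\in\Gibb^u_{\nu}(f)$ when the base measures vary --- should not be attempted by passing the holonomy invariance of the conditional measures directly to the weak-$*$ limit: disintegrations are not weak-$*$ continuous in the measure, and the compactness argument of Proposition~\ref{p.Gibbs u state}(a) uses in an essential way that the unstable conditionals are the fixed ones of a single $\nu$ with product structure, which is lost when $\nu_n$ varies. The adaptation that does work (and is the actual mechanism behind Corollary~\ref{invariancelimit}) is the entropy one: $\mu_n\in\Gibb^u_{\nu_n}(f)$ gives $h_{\mu_n}(f,\cF^u)=h_{\nu_n}(f_c)\to h_{top}(f_c)=h_{\nu}(f_c)$, Proposition~\ref{p.uppersemicontinuous} gives $h_\mu(f,\cF^u)\ge h_\nu(f_c)$, and Theorem~\ref{u-invariantprinciple}, applied to $\mu$ with $\pi_*\mu=\nu$ (which you have already established), forces equality and hence $\mu\in\Gibb^u_\nu(f)$. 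With that substitution your proof is complete; note that this is exactly the paper's opening move, so once you use it your detour through the base becomes optional.
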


\section{Quest for (non-)hyperbolic measures} \label{quest}
We would like to mention that Theorem \ref{main.nonunihyp} sheds light on some questions and conjectures in smooth ergodic theory.

\begin{conjecture} (D\'{i}az-Gorodetski \cite{DG})
In $\Diff^r(M), r \geq 1,$ there exists an open and dense subset  $\mathcal{U} \subset \Diff^r(M) $ such that  every $f \in \mathcal{U}$ is either uniformly hyperbolic or has an ergodic non-hyperbolic invariant measure.
\end{conjecture}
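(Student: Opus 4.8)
This is a long-standing open problem; what follows is the line of attack by which one expects it to be settled, with an indication of where the argument breaks down. The set $\mathcal{H}$ of uniformly hyperbolic (Axiom A, no cycles) diffeomorphisms is $C^r$-open, so it suffices to exhibit, inside $\Diff^r(M)\setminus\mathcal{H}$, a relatively open and dense set of maps each carrying an ergodic non-hyperbolic invariant measure; the union of this set with $\mathcal{H}$ is then the desired $\mathcal{U}$. So assume $f$ is not uniformly hyperbolic.

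The first step is to pass from ``not uniformly hyperbolic'' to a \emph{robust mechanism of non-hyperbolicity}. One perturbs $f$, inside a prescribed $C^r$-ball, to a diffeomorphism carrying a heterodimensional cycle or a homoclinic tangency attached to a nontrivial chain recurrence class, and then, via the Bonatti--D\'{i}az theory of blenders, perturbs once more to make such a cycle $C^r$-\emph{robust}; robustness is exactly what converts ``dense'' into ``open''. In the $C^1$ topology this step is largely in place --- through Pugh's closing lemma, Hayashi's connecting lemma, the analysis of aperiodic chain classes and the Crovisier--Pujals essential-hyperbolicity dichotomy --- and it isolates, as the only remaining $C^1$-generic possibility, a diffeomorphism that is essentially hyperbolic without being Axiom A.

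The second step is to extract the measure on the open set of diffeomorphisms carrying a robust heterodimensional cycle. Here I would apply the Bochi--Bonatti--D\'{i}az criterion: the cycle can be arranged so as to contain a \emph{flip-flop family} --- two compact families of discs, one contracted and one expanded along a one-dimensional central subbundle, intertwined by the dynamics --- and any flip-flop family forces an ergodic invariant measure whose central Lyapunov exponent vanishes, hence is non-hyperbolic. Robust homoclinic tangencies are reduced, after perturbation, to the same picture through the heterodimensional behaviour they generate.

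The hard parts are two. For $r\geq 2$, the first step is out of reach: to know that a $C^r$ diffeomorphism with no (perturbable) heterodimensional cycle and no homoclinic tangency is uniformly hyperbolic one needs a $C^r$ connecting lemma and control of the derivative along periodic orbits, which are unavailable, so for $r\geq 2$ the statement is genuinely open. Even in $C^1$, one must still handle the essentially-hyperbolic-but-not-Axiom-A case, showing that the non-hyperbolic remnant of the non-wandering set carries, after perturbation, a non-hyperbolic ergodic measure. Finally, Theorem~\ref{main.nonunihyp} carries the whole program through inside the class $\SPH_1(M)$: such an $f$ is never uniformly hyperbolic; when $f$ is of rotation type its unique measure of maximal entropy is non-hyperbolic; and when $f$ is not of rotation type the families $\mu^{+}_i$ and $\mu^{-}_j$ of Theorem~\ref{dichotomy} realize periodic points of consecutive indices lying, by accessibility and the transitivity of $f_c$, in a common homoclinic class, so the resulting heterodimensional cycle feeds the flip-flop construction and produces the required non-hyperbolic ergodic measure.
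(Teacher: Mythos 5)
The statement you were asked to prove is not a theorem of this paper at all: it is the D\'{i}az--Gorodetski conjecture, quoted verbatim from \cite{DG} in the section ``Quest for (non-)hyperbolic measures,'' and the paper contains no proof of it (nor claims one). So there is no proof in the paper to compare yours against, and your decision to present a program rather than a proof, flagging explicitly that the $C^r$, $r\geq 2$, case is out of reach and that even the $C^1$ case is incomplete, is the honest and correct reading of the status of the statement. Your sketch of the expected mechanism (robust heterodimensional cycles via blenders, flip-flop families in the sense of Bochi--Bonatti--D\'{i}az producing ergodic measures with vanishing central exponent) is consistent with the literature the paper points to.

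However, your final paragraph misstates the role of the paper's results. Theorem~\ref{main.nonunihyp} does not ``carry the program through'' inside $\SPH_1(M)$ and does not produce non-hyperbolic ergodic measures; it is an obstruction in the opposite direction: for non-rotation-type $f\in\SPH_1(M)$ every ergodic measure with entropy larger than $h_{top}(f)-\vep$ has central exponent bounded away from zero. The paper's own comment on the conjecture is precisely that, by this rigidity, one \emph{cannot} hope to find non-hyperbolic ergodic measures among high-entropy measures for such systems; the existence of ergodic measures with positive entropy and zero central exponent on the robustly transitive part of $\SPH_1(M)$ is the Bochi--Bonatti--D\'{i}az result \cite{BBD}, which the paper cites and then constrains (those measures cannot have entropy near the maximum). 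Also note that the rotation-type case, where the unique maximal measure of Theorem~\ref{dichotomy} is indeed non-hyperbolic, is not an open condition, so it cannot by itself furnish the open set $\mathcal{U}$ your outline needs. If you keep the last paragraph, rewrite it to say that the paper's theorems inform \emph{where not to look} for the measures required by the conjecture, rather than that they supply them.
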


By our result, for $C^1$ open and $C^{\infty}-$dense partially hyperbolic dynamics with one dimensional compact central leaves (forming a circle bundle)  one cannot look for  non-hyperbolic ergodic measures among measures of large entropy.

Let us mention that by Bochi, Bonatti and D\'{i}az \cite{BBD} result, there exists an open and dense subset  $\mathcal{U} \subset \SPH_1(M) \cap RT(M)$ such that any $f \in \mathcal{U}$  has an ergodic measure with positive entropy and zero central Lyapunov exponent. Here $RT(M)$ is the set of $C^1-$robustly transitive diffeomorphisms. By our theorem, these non-hyperbolic ergodic measures can not have high entropy.

J. Buzzi  [section 1.5, \cite{Buz}] posed questions about abundance of hyperbolicity (of measures) for typical partially hyperbolic dynamical systems with one dimensional central bunde.  Our result  gives some partial answer to his questions too.

We would like also recall a recent result of D\'{i}az-Gelfert-Rams \cite{DGR}. They study transitive step skew -product maps modeled over a complete shift whose fiber maps are circle maps. They focus on non-hyperbolic measures (with zero fiber exponent) and prove that such measures are approximated in weak$-*$ and entropy by hyperbolic measures.  In the proof of their theorem 3, they consider three cases for the variational principle where the first case is:
$h_{top} = sup_{\mu \in \mathcal{M}_{erg, 0}} h_{\mu}$ where $ \mathcal{M}_{erg, 0}$ is the subset of ergodic measures of step skew product with vanishing fiber exponent. By our result this first case will not occur. We will not give the rigorous proof of this fact here and it will appear elsewhere. This gives a more accurate information for their study. Still finding the value $ sup_{\mu \in \mathcal{M}_{erg, 0}} h_{\mu}$ is interesting.

We would like to mention that in general setting of partially hyperbolic diffeomorphisms with one dimensional central bundle, it is not clear whether high entropy ergodic measures inherit the hyperbolicity of ergodic measures of maximal entropy (whenever all of ergodic measures of maximal entropy are hyperbolic).
For $C^{1+ \alpha}-$diffeomorphisms in the homotopy class of Anosov diffeomorphisms of $\mathbb{T}^3$, a similar argument to Theorem 5.1 in \cite{ures} shows that high entropy measures are hyperbolic.

\subsection{Less regularity}
Although throughout this article we always assume the diffeomorphisms to be $C^2$, Theorems~\ref{main.nonunihyp} and
~\ref{main.converging} also hold for $C^{1+\alpha}$ diffeomorphisms. In fact, the only place 
where $C^2$ hypothesis used is where we use the Lipschitzness of unstable holonomy inside center-unstable plaques (see \cite{LY}) to conclude that when center Lyapunov exponents are non-positive, the entropy of a measure is equal to the entropy along the unstable foliation (See the Proof of Theorem \ref{main.converging}.)

More precisely, the $C^2$ hypothesis is used to obtain Lipschitz holonomy of Pesin unstable lamination inside the center
unstable set (see \cite{LY}[Section 4.2]). For $f\in \Diff^{1+\alpha} (M)$, it has been shown in \cite{Brown} that the strong unstable foliation restricted to each center unstable leaf is Lipschitz,  then one may repeat the proof of \cite{LY}.

\section*{acknowledgment}
A. T was in a research period at Universit\'{e} Paris-Sud (thanks to support of FAPESP-Brasil:2014/23485-2, CNPq-Brasil) and would like to thank hospitality of Laboratoire de Topologie and in particular, Sylvain Crovisier and J\'{e}r\^{o}me Buzzi for many useful conversations. 
J.Y. was partially supported by CNPq, FAPERJ, and PRONEX.

\section{ Preliminaries } \label{preliminaries}

A partially hyperbolic diffeomorphism is called {\it accessible} if  one can join any two points in the manifold by a
path of finitely many  components which is piecewise tangent to either $E^s$ or $E^u.$

In general it is not true that there is a foliation tangent to
$E^c$. Indeed, there may be no foliation tangent to $E^c$ even if  $\dim E^c =1$ (see \cite{HHU}). We shall say that $f$ is  \emph{dynamically coherent} if there exist invariant foliations $\cF^{c\s}$ tangent to $E^{c\s}=E^c \oplus E^\s$  for $\s=s,u$. Note that by taking the intersection of these foliations we obtain an invariant foliation $\cF^c$ tangent to $E^c$ that subfoliates $\cF^{c\s}$ for $\s=s,u$. Observe that $\cF^{\s}$ also subfoliates $\cF^{c\s}$ for $\s \in \{s, u\}.$

For any dynamically coherent $f$ and any two points $x, y$ with $y \in \mathcal{F}^u(x),$  there is a neighbourhood $U_x$ of $x$ in $\mathcal{F}^c(x)$ and a homeomorphism $H^u_{x, y} : U_x \rightarrow \mathcal{F}^c(y)$ such that $H^u_{x, y}(x) =y$ and $H^u_{x, y}(z) \in \mathcal{F}^u(z) \cap \mathcal{F}^c_{loc} (y).$ Similarly, one may define local stable holonomies $H^s_{x, y}$ for $y \in \mathcal{F}^s(x).$

We say $f$ admits global unstable holonomy if for any $y \in \mathcal{F}^u(x)$ the holonomy is defined globally $H^u_{x, y} : \mathcal{F}^c(x) \rightarrow \mathcal{F}^c(y).$ Similarly we define the notion of global stable holonomy and $f$ admits global holonomies when it admits global stable and unstable holonomies.

There are many robust examples of partially hyperbolic diffeomorphisms which are dynamically coherent.
The simplest construction goes as follows. Start with a hyperbolic toral automorphism $A : \mathbb{T}^2\to \mathbb{T}^2$,
and then let $f_0\in \Diff(\mathbb{T}^2\times S^1)$ be a skew product map such that
$$f_0((x,\theta))= (A(x),\theta)
$$
Then $f_0$ is a partially hyperbolic diffeomorphism and so is every $f$ in a $C^1$ neighborhood of $f_0$.
Moreover, it follows from general results in \cite{HPS} that $f$ is indeed dynamically coherent and admits global holonomies.
A generalization of the above examples are fibered partially hyperbolic dynamics (See Avila-Viana-Wilkinson and Hirsch-Pugh-Shub \cite{HPS}). They are examples of dynamically coherent dynamics admitting global holonomies. A partially hyperbolic diffeomorphism $f: M \rightarrow M$ is fibered partially hyperbolic dynamics if there exists a continuous fiber bundle $\pi : M \rightarrow B$ with fibers modeled by compact manifold such that $\pi^{-1} (b)$ is a center leaf of $f$ for any $b \in B.$

Let  $M_c:= M / \mathcal{F}^c$ be the quotien space and denote by $f_c$ the induced dynamics, i.e; $f_c \circ \pi = \pi \circ f$ where $\pi: M \rightarrow M_c$ is the natural projection. $f_c$ is a {\it topological Anosov homeomorphism} (See \cite{Vi} and section 2.2 of \cite{YV}).  We assume that $f_c$ is transitive (hypothesis H3 in section 2), which is the case for all known examples.   A transitive topological Anosov homeomorphism shares many similar properties with Anosov diffeomorphisms. For instance, it has a pair of topological foliations which play the same role of stable/unstable foliations, there exist Markov partitions (for Markov partitions in the topological Anosov setting see \cite{Hi}.) and its unique measure of maximal entropy can be obtained by the Margulis method. We give a more precise description in what follows:

An Anosov homeomorphism of $M_c,$ by definition  admits two invariant topological foliations $\cW^s$ and $\cW^u$ with similar dynamical properties as in the diffeomorphism case. The leaves are topological submanifolds and $$\cW^s(\xi) = \bigcup_{n} f_c^{-n} \cW^s_{\epsilon}(f_c^n(\xi)), \cW^u(\xi) = \bigcup_{n} f_c^{n} \cW^u_{\epsilon}(f_c^{-n}(\xi))$$
where $$\cW^s_{\epsilon}(\xi) = \{ \eta \in M_c: d_c(f_c^n (\xi),f_c^n (\eta) ) \leq \epsilon\} $$
$$ \cW^u_{\epsilon}(\xi) = \{\eta \in M_c: d_c(f_c^{-n} (\xi),f_c^{-n} (\eta) ) \leq \epsilon\}   $$
and $d_c$ is a distance in $M_c:$
$$
d_c(\xi,\eta) := \sup_{x \in \xi} \inf_{y \in \eta} d(x, y) + \sup_{y \in \eta} \inf_{x \in \xi} d(x, y)
$$ for any $\xi, \eta \in M_c.$

As we will assume that $f_c$ is transitive throughout this article, the quotient space $M_c$ is homotopic to a $d-1$ dimension torus $\TT^{d-1}$.
We use $\cF^{\s}$, $\s=s,c,u$ to denote the invariant foliations of $f$, and $\cW^i$, $i=s,u$ to
denote the stable and unstable foliation of $f_c$. As a summary, we have: 
 $f_c$ is a transitive topological Anosov homeomorphism. It
admits a Markov partition and a unique measure of maximal entropy $\nu$. 



\subsection{Nonlinear cocycle and Invariance Principle} \label{invariance1}

Any $f: M \rightarrow M$ satisfying (H1), (H2) and (H3) can be considered as a smooth (nonlinear) cocycle over $f_c: M_c \rightarrow M_c$ with global holonomies.  We denote by $M_{x_c}$ the fiber $\pi^{-1}(x_c)$ which is a center leaf and $f_{x_c} : M_{x_c} \rightarrow M_{f_c(x_c)}$ is the restriction of $f$ on $M_{x_c}.$ By hypothesis we have two families of stable and unstable holonomies: for any $y_c \in W^u(x_c),$ then $H^u_{x_c , y_c} : M_{x_c} \rightarrow M_{y_c}$ satisfies:
\begin{itemize}
\item $H^u_{y_c, z_c} \circ H^u_{x_c, y_c} = H^u_{x_c, z_c}$ and $H^u_{x_c, x_c} = id$
\item $f_{y_c} \circ H^u_{x_c, y_c} = H^u_{f_c(x_c), f_c(y_c)} \circ f_{x_c}$
\end{itemize}
A crucial feature of the above holonomy map is that it is Lipschitz when $f$ is $C^2.$
The consequences of this regularity are explored in the work of Ledrappier-Young \cite{LY} and it is used in our paper.
If the central foliation is one-dimensional then  the Lipschitz property holds for the lower regular diffeomorphisms, for instance, $f \in C^{1+ \alpha}.$ See \cite{PSW,Brown} for other technical conditions to guarantee the Lipschitz property of stable/unstable holonomies inside center-stable/center-unstable manifolds.

Any invariant measure (by $f$) $\mu$ projects down to $\nu:= \pi_* \mu$ which is invariant by $f_c.$
By Rokhlin's \cite{R} disintegration theorem, there exist a system of conditional measures $\{\mu^c_{x_c}\}$ such that $$\mu = \int \mu^c_{x_c} d \nu(x_c).$$

We seek for a criterium which implies the invariance of conditional measures by $u-$holonomy (or $s-$holonomy), i.e.
\begin{equation} \label{determinism}
\mu^c_{y_c} = (H^u_{x_c, y_c})_{*} \mu^c_{x_c}
\end{equation}
 for any $y_c \in W^u(f_c, x_c)$ for $x_c, y_c$ belonging to a full $\nu$ subset of $M_c.$ Similarly we define $s-$invariance of the conditional measures. This invariance of conditional measures by holonomies is called invariance principle by Avila and Viana \cite{AV}.

The simple but fundamental observation is that instead of verifying  $u-$invariance  of $\{\mu^c_{x_c}\}$ we may verify equivalently $c-$invariance of conditional measures on local unstable plaques $\{ \mu^u_{x} \}$. This can be rewritten as $$\pi_* \mu^u_{x} = \nu^u_{\pi(x)}.$$ We denote by $Gibb^u_{\nu} (f)$ any measure satisfying the above condition. See section \ref{proofIP} for more precise definitions.

The choice of local unstable plaques is based on the fact that the quotient dynamics $f_c$ admits a natural partition (Markov partition) and by means of such partition we are able to define a measurable partition sub-ordinated to unstable foliation. The atoms of such measurable partition are called unstable plaques.

The above observation about the equivalence between $u-$invariance and $c-$invariance and other abstract measure theoretical results are proved in Section \ref{measurabletoolbox}.

\subsection{Invariance principle with vanishing exponents} \label{invariance2}
Using the above corollary for $f$ and $f^{-1}$ one can conclude that if the central Lyapunov exponents are zero then $\mu$ is both $u$ and $s-$invariant. That means, there are two systems of conditional measures one of them  $s-$invariant and another $u-$invariant. Observe that up to now, the conditional measures are measurable objects: They are defined on a full measurable subset and varies measurably. If we suppose that $\nu:= \pi_* \mu$ has {\bf local product structure} (as defined in measurable toolbox section \ref{measurabletoolbox}) then one can prove that there exists a system of conditional measures defined {\bf everywhere} and  the conditional measures depend continuously on the base point. This last passage makes a subtle use of Hopf type argument, See Avila-Viana's invariance principle (See theorem (D) and Proposition 4.8 in \cite{AV}). So we would like to remark that our theorem and the Hopf type argument given by Avila-Viana reveals a different and shorter proof of the Invariance principle. Moreover, our approach makes the role of entropy much more clear.

We emphasize that we prove and apply our result  in the case of partially hyperbolic diffeomorphisms (as we see in section \ref{proofhighentropy}).  Avila-Viana proved  $u-$invariance principle in the more general setting and in the same spirit of Ledrappier result for abstract sub-sigma algebras of the base space. See Theorems (B) in \cite{AV}.


\section{A measurable toolbox} \label{measurabletoolbox}

In this section we develop an abstract measurable toolbox which deals with disintegration of measures and their properties mostly related to invariance with respect to the holonomy of foliations. {\bf The notations are similar to the dynamical ones, but no dynamics is assumed here.}
Let $(A:=[0, 1]^{c+u}, \mu^{cu})$ be the unit square equipped with a probability measure and $\mathcal{F}^c , \mathcal{F}^u$ be a pair of  transversal foliations of $A$ with compact leaves and dimension of leaves respectively $c$ and $u$.

We assume the following topological  product structure: There exists a continuous injective and surjective map $Q(. , .) : \mathcal{F}^u(x_0) \times \mathcal{F}^c(x_0) \rightarrow [0, 1]^{c+u}$ such that $Q(x, y) = \mathcal{F}^c(x) \cap \mathcal{F}^u(y).$

\begin{definition} \label{def:mensuravel}
We say that a partition $\mathcal P$ is measurable (or countably generated) with respect to $\mu$ if there exist a measurable family $\{A_i\}_{i \in \mathbb N}$ and a measurable set $F$ of full measure such that
if $B \in \mathcal P$, then there exists a sequence $\{B_i\}$, where $B_i \in \{A_i, A_i^c \}$ such that $B \cap F = \bigcap_i B_i \cap F$.
\end{definition}

 Let $\mathcal P$ be a measurable partition of a compact metric space $M$ and $\mu$ a borelian probability. Then, by Rokhlin's theorem \cite{R},  there exists a disintegration by conditional probabilities for $\mu$.
 The foliations $\mathcal{F}^{u,c}$ will be considered as measurable partitions and we denote by $\{\mu_{x}^c\}$ and $\{\mu^u_{x}\}$ the system of conditional probability measures along  $\mathcal{F}^c$ and $\mathcal{F}^u:$
$$\mu^{cu} = \int_{A} \mu^u_{x} d\mu^{cu} (x) = \int_{A} \mu^{c}_x d \mu^{cu}(x)$$
where $\mu_ {x}^{u}$ (resp. $\mu^c_x$) is a probability measure depending only on the leaf $\mathcal{F}^{u} (x)$ (respt. $\mathcal{F}^c(x)$).

Another equivalent way to write the disintegration equation (along $\mathcal{F}^c$) above  is to consider the quotient space $A/ \mathcal{F}^{c}$ equipped with the quotient measure $\tilde{\mu}^{cu}:= \pi_*(\mu^{cu})$ where $\pi: A \rightarrow A/\mathcal{F}^{c}$ is the canonical projection. We can write
$$
 \mu^{cu} =  \int_{A/\mathcal{F}^c} \mu_{P}^c d \tilde{\mu}^{cu} (P)
$$
where $\mu^c_{P}$ is the conditional probability measure on a typical leaf of $\mathcal{F}^c$. By definition,  for any integrable function $\phi: M\to \mathbb{R}$, we have
$$\int_{A} \phi d \mu^{cu} = \int_{A/\mathcal{F}^c} \int_{P} \phi(x) d \mu_{P}^c (x) d \tilde{\mu}^{cu}(P).  $$

The product structure of the pair of foliation above, permits us to define holonomy maps $H^u$ and $H^c$ respectively between leaves of $\mathcal{F}^c$ and $\mathcal{F}^u.$

We say a system of disintegration $\mu^c$ is \emph{$u-$invariant} if $\mu_y^{c} = (H^u_{x, y})_{*} \mu_x^c$ for
 $x,y$ belong to a $\mu$ full measure subset, where $H_{x, y}^u$ is the $u-$holonomy map between $\mathcal{F}^c(x)$ and $\mathcal{F}^c(y)$ induced by the foliation $\mathcal{F}^u$. Similarly we define \emph{$c-$invariance} of $\{\mu_{y}^u\}$ by $\mu_{y}^u = (H^c_{x, y})_{*} \mu^u_{x}.$
\begin{lemma} \label{product}
If $\{\mu_{x}^c\}$ is $u-$invariant then  $\{\mu_x^u\}$ is $c-$invariant and $\mu^{cu} = Q_* (\mu^u_{x_0} \times \mu^c_{x_0})$ for any typical point $x_0.$
\end{lemma}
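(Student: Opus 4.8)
The plan is to work entirely in the local chart provided by the homeomorphism $Q(\cdot,\cdot)\colon \mathcal F^u(x_0)\times\mathcal F^c(x_0)\to[0,1]^{c+u}$, so that $\mathcal F^c$ becomes the partition into ``vertical'' slices $\{y\}\times\mathcal F^c(x_0)$ and $\mathcal F^u$ the partition into ``horizontal'' slices $\mathcal F^u(x_0)\times\{z\}$ (abusing notation, I identify $A$ with this product). Under this identification the $u$-holonomy $H^u_{x,y}$ between two $\mathcal F^c$-leaves is simply the identity on the $\mathcal F^c(x_0)$-coordinate, and $c$-holonomy $H^c_{x,y}$ between two $\mathcal F^u$-leaves is the identity on the $\mathcal F^u(x_0)$-coordinate. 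So $u$-invariance of $\{\mu^c_x\}$ literally says: the conditional measure $\mu^c_x$, viewed as a measure on $\mathcal F^c(x_0)$, does not depend on $x$ (for $\mu$-a.e.\ $x$); call this common measure $\lambda^c$ on $\mathcal F^c(x_0)$. Likewise the goal reduces to showing $\mu^u_x$, viewed as a measure on $\mathcal F^u(x_0)$, is a.e.\ independent of $x$ — call it $\lambda^u$ — and that $\mu^{cu}=Q_*(\lambda^u\times\lambda^c)$.

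First I would use the disintegration of $\mu^{cu}$ along $\mathcal F^c$ combined with $u$-invariance: for any integrable $\phi$,
\[
\int_A \phi\,d\mu^{cu}=\int_{A/\mathcal F^c}\!\Big(\int_{\mathcal F^c(x_0)}\phi\circ Q(y,z)\,d\lambda^c(z)\Big)d\tilde\mu^{cu}(y),
\]
where $\tilde\mu^{cu}$ is the quotient measure on $A/\mathcal F^c\cong\mathcal F^u(x_0)$. Taking $\phi$ of the form $\phi=\psi(y)$ (i.e.\ functions constant along $\mathcal F^c$) shows $\tilde\mu^{cu}$ is the pushforward of $\mu^{cu}$ to $\mathcal F^u(x_0)$. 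Now plug in test functions $\phi=\alpha(y)\beta(z)$ (product form, legitimate since products of continuous functions of the two coordinates are dense): the right-hand side factors as $\big(\int\alpha\,d\tilde\mu^{cu}\big)\big(\int\beta\,d\lambda^c\big)$. Hence $\mu^{cu}=Q_*(\tilde\mu^{cu}\times\lambda^c)$, which is a genuine product measure in the chart. Once this product formula is in hand, the disintegration of a product measure along its second (horizontal) coordinate is, by essential uniqueness of Rokhlin disintegrations, the constant family $\mu^u_x=\tilde\mu^{cu}$ for a.e.\ $x$; this is exactly $c$-invariance of $\{\mu^u_x\}$, and we read off $\lambda^u=\tilde\mu^{cu}$, giving $\mu^{cu}=Q_*(\mu^u_{x_0}\times\mu^c_{x_0})$ for any typical $x_0$.

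The main obstacle is bookkeeping rather than depth: one must be careful that ``$u$-invariance'' is stated on a $\mu$-full set of \emph{pairs} $x,y$, and promote this to the statement that the map $x\mapsto (H^u_{x_0,x})_*\mu^c_{x_0}$ is a.e.\ constant equal to a single measure $\lambda^c$ — this uses a Fubini-type argument on $A/\mathcal F^c$ and the fact that holonomies compose ($H^u_{y,z}\circ H^u_{x,y}=H^u_{x,z}$), which holds in this product chart trivially. A secondary technical point is justifying the passage to product test functions and the measurability/continuity needed to identify the quotient measure; here I would invoke that $Q$ is a homeomorphism so that the Borel structures match up and Stone–Weierstrass applies on the compact leaves. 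No use of dynamics is needed, consistent with the standing remark that opens Section \ref{measurabletoolbox}.
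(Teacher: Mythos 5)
Your proof is correct and follows essentially the same route as the paper: both use the disintegration along $\mathcal{F}^c$, the product structure $Q$ (your chart computation is exactly the paper's identity $H^u_{x_0,z}(x)=H^c_{x_0,x}(z)$), Fubini, and essential uniqueness of disintegration. The only difference is cosmetic ordering — you extract the product formula first and read off $c$-invariance, while the paper swaps the integrals first — so no further comment is needed.
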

By the definition of conditional measures and $u-$invariance of $\mu^c$ we have
$$
 \int \phi d \mu = \int_{\mathcal{F}^u(x_0)} \int_{\mathcal{F}^c(x_0)} \phi \circ H^u_{x_0, z}(x) d \mu^c_{x_0}(x) d \tilde{\mu} (z)
$$
for any continuous function $\phi$ where $\tilde{\mu}$ is the quotient measure on the quotient space identified with $\mathcal{F}^u(x_0).$
Using Fubini's theorem and the fact that $H^u_{x_0, z}(x) = H^c_{x_0, x} (z)$ for any $x \in \mathcal{F}^c(x_0) , z \in \mathcal{F}^u(x_0)$ we obtain
$$
\int \phi d \mu = \int_{\mathcal{F}^c(x_0)} \int_{\mathcal{F}^u(x_0)} \phi \circ H^c_{x_0, x}(z) d \tilde{\mu}(z) d \mu^c_{x_0}(x).
$$
By essential uniqueness of disintegration, the above equality shows that the system of conditional measures $\{ \mu^u\}$ satisfies $\mu^u_{x} = H^c_{x_0, x} \tilde{\mu}$, which implies the $c-$invariance.
The last claim of the lemma is direct from definition and Fubini theorem.

\subsection{Disintegration along three (coherent) foliations} \label{x0}
Now we consider a probability measure on the unit cube $K = [0, 1]^{s+c+u}$ equipped with probability $\mu$ and consider   three transverse foliations $\mathcal{F}^{s}, \mathcal{F}^{c}, \mathcal{F}^{u}$ (we call them stable, central and unstable foliation) and assume the following coherence property:  there exist two more foliations $\mathcal{F}^{cu}$ (center-unstable foliation) and $\mathcal{F}^{cs}$ (center-stable foliation) such that subfoliate respectively  into $\mathcal{F}^{c}, \mathcal{F}^{u}$  and $\mathcal{F}^{c}, \mathcal{F}^{s}.$ Moreover, inside any leaf of $\mathcal{F}^{cs}$ the leaves of $\mathcal{F}^{c}$ and $\mathcal{F}^{s}$ have product structure. Similarly we assume product structure inside leaves of $\mathcal{F}^{cu}.$

We have  two holonomy maps called unstable holonomy and stable holonomy. The unstable holonomy is defined between any two central leaves inside a center-unstable leaf and similarly we define stable holonomy. Observe that in this section we are not assuming any dynamical property for these foliations and just use the names that will be used later.


We make some useful geometrical identifications of the quotient spaces. Let $x_0 \in K$ which we fix from now on.  By the coherence hypothesis the quotient spaces $K/\mathcal{F}^s$ and $K/\mathcal{F}^u$ may be identified by $\mathcal{F}^{cu}_{x_0}$ and $\mathcal{F}^{cs}_{x_0}.$

The quotient by central foliation $K /\mathcal{F}^c$ is a compact metric space and it admits  two topological foliations $W^u$ and $W^s.$  Let $\pi: K \to \tilde{K}=K/ \mathcal{F}^c$  be the canonical projection and  $\nu:= \pi_* (\mu)$.

We have two following important properties:
\begin{itemize}
\item $\pi(\mathcal{F}^{cu} (x)) = W^u(\pi(x))$
\item $\pi(\mathcal{F}^{cs} (x)) = W^s(\pi(x))$.
\end{itemize}
So we may identify $K/ \mathcal{F}^{cu}$ and $K/ \mathcal{F}^{cs}$ respectively with $W^s(\pi (x_0))$ and $W^u(\pi(x_0)).$ We also may identify $\tilde{K} / W^u$ and $\tilde{K} / W^s$ respectively with $W^s(\pi(x_0))$ and $W^u(\pi(x_0)).$
With a bit of abuse of notations, for $t \in W^s(\pi (x_0))$ we denote by $\mathcal{F}_{t}^{cu}$ as the center-unstable plaque corresponding to $t.$

In what follows we study conditional measures along various foliations, by
$$\{\mu_{x}^{*}\}, * \in \{s, c, u, cs, cu\}$$
we mean the conditional probabilities along leaves of foliation $\mathcal{F}^{*}.$ We also may disintegrate $\nu$ along foliations $W^u$ and $W^s$ obtaining two additional system of conditional measures $\nu_{\pi(x)}^u$ and $\nu_{\pi(x)}^s.$

\begin{lemma} \label{quotient}
$\pi_* (\mu_{x}^{cu}) = \nu^u_{\pi(x)}$ for $\mu$ almost every $x.$
\end{lemma}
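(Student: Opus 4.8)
The plan is to recognize the assertion as an instance of the functoriality of Rokhlin disintegration under the factor map $\pi$: the decisive geometric fact is that the center-unstable foliation of $K$ is exactly the $\pi$-preimage of the unstable foliation $W^u$ of $\tilde{K}$, and from this $\pi_*(\mu^{cu}_x)=\nu^u_{\pi(x)}$ follows by essential uniqueness of disintegration.

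First I would show that $\mathcal{F}^{cu}(x)=\pi^{-1}\big(W^u(\pi(x))\big)$ for every $x\in K$. The inclusion $\subseteq$ is immediate from the property $\pi(\mathcal{F}^{cu}(x))=W^u(\pi(x))$ recorded above. For $\supseteq$, if $\pi(y)\in W^u(\pi(x))=\pi(\mathcal{F}^{cu}(x))$, choose $y'\in\mathcal{F}^{cu}(x)$ with $\pi(y')=\pi(y)$; then $y$ and $y'$ lie on the same fibre of $\pi$, namely the center leaf $\mathcal{F}^c(y')$, and by coherence $\mathcal{F}^c(y')\subseteq\mathcal{F}^{cu}(y')=\mathcal{F}^{cu}(x)$, so $y\in\mathcal{F}^{cu}(x)$. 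Hence $\mathcal{F}^{cu}$ and $\pi^{-1}(W^u)$ coincide as (measurable) partitions of $K$.

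Next I would check directly that the pushed-forward family is a disintegration of $\nu$ along $W^u$. For $\xi\in\tilde{K}$ put $\rho_\xi:=\pi_*(\mu^{cu}_x)$ for any $x$ with $\pi(x)=\xi$; this is well defined because $\mu^{cu}_x$ depends only on the leaf $\mathcal{F}^{cu}(x)=\pi^{-1}(W^u(\xi))$, which is determined by $\xi$. Then each $\rho_\xi$ is a probability measure supported on $\pi(\mathcal{F}^{cu}(x))=W^u(\xi)$, it depends only on the leaf $W^u(\xi)$, and $\xi\mapsto\rho_\xi$ is measurable. Finally, for every bounded measurable $\psi:\tilde{K}\to\RR$, using $\nu=\pi_*\mu$ and then the disintegration of $\mu$ along $\mathcal{F}^{cu}$,
\begin{align*}
\int_{\tilde{K}}\Big(\int_{\tilde{K}}\psi\,d\rho_\xi\Big)\,d\nu(\xi)
&=\int_K\Big(\int_{\tilde{K}}\psi\,d(\pi_*\mu^{cu}_x)\Big)\,d\mu(x)
=\int_K\Big(\int_K\psi\circ\pi\,d\mu^{cu}_x\Big)\,d\mu(x)\\
&=\int_K\psi\circ\pi\,d\mu=\int_{\tilde{K}}\psi\,d\nu .
\end{align*}
Thus $\{\rho_\xi\}$ satisfies all the defining properties of the disintegration of $\nu$ with respect to the measurable partition $W^u$, so by essential uniqueness $\rho_{\pi(x)}=\nu^u_{\pi(x)}$ for $\mu$-a.e.\ $x$, i.e.\ $\pi_*(\mu^{cu}_x)=\nu^u_{\pi(x)}$, as claimed.

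The only delicate point is the first step, and more precisely the underlying bookkeeping: one must make sure that $\mathcal{F}^{cu}$ and $W^u$ are genuine measurable partitions adapted to $\mu$ and $\nu$, and that the set-theoretic identity $\mathcal{F}^{cu}=\pi^{-1}(W^u)$ holds modulo the relevant null sets. This is routine given the coherence and local product-structure hypotheses of this section. Once it is granted, the rest is the familiar ``disintegration commutes with a factor map'' computation carried out above, and I do not anticipate any real obstacle.
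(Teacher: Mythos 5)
Your proposal is correct and is essentially the paper's own argument: both rest on the identification of the partition $\mathcal{F}^{cu}$ with $\pi^{-1}(W^u)$ (which in the paper appears as the equality of quotient measures $\tilde\mu=\tilde\nu$ on $W^s(\pi(x_0))$), followed by pushing the disintegration of $\mu$ forward under $\pi$ and invoking essential uniqueness of conditional measures. Your test-function verification against $\nu$ is just a slightly different packaging of the paper's computation $\nu=\pi_*\mu=\int \pi_*\mu^{cu}\,d\tilde\nu$.
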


\begin{proof}

By definition $\mu = \displaystyle{\int} \mu^{cu} d \tilde{\mu}$ where $\tilde{\mu}$ is the probability on the quotient $K / \mathcal{F}^{cu}$ which is identified with $W^s(\pi(x_0)).$ We also have
$\nu = \displaystyle{\int} \nu^u d \tilde{\nu}$ where the quotien measure $\tilde{\nu}$ is defined on  $\tilde{K} / W^u = W^s(\pi(x_0)).$  It is not difficult to see that  $\tilde{\nu} = \tilde{\mu}.$ Indeed taking any measurable subset $S \subset  W^s(\pi(x_0))$ we have:
$$
 \tilde{\mu} (S) = \mu( \bigcup_{t \in S} \mathcal{F}^{cu}(t))
$$
and
$$
\tilde{\nu} (S) = \nu(\bigcup_{t \in S} W^u(t)) = \mu( \pi^{-1} (\bigcup_{t \in S} W^u(t)) ) = \mu( \bigcup_{t \in S} \mathcal{F}^{cu}(t)).
$$

So, by definition
\begin{align*}
\int_{\tilde{K}/ W^u } \nu^u d \tilde{\nu} = \nu =& \pi_*{\mu}
= \pi_* (\displaystyle{\int}_{K/ \mathcal{F}^{cu}}\mu^{cu} d \tilde{\mu} ) \\=& \int_{K/ \mathcal{F}^{cu}} \pi_{*} \mu  d \tilde{\mu} = \int_{\tilde{K}/ W^u } \pi_{*} \mu^{cu}  d \tilde{\nu}
\end{align*}

By essential uniqueness of disintegration we conclude the proof of lemma.
\end{proof}

\begin{proposition}  \label{gibbs-uinvariant}

    $\nu_{\pi(x)}^u = \pi_* (\mu_{x}^u)$ holds for $\mu$ almost every $x$ if and only if $\mu^c$ is $u-$invariant.
\end{proposition}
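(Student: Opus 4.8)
The plan is to reduce the statement, leaf by leaf inside the center-unstable foliation, to the two-foliation situation already treated in Lemma \ref{product}, translating the quotient data via Lemma \ref{quotient}. First I would localize: since $\mathcal{F}^c$ and $\mathcal{F}^u$ both subfoliate $\mathcal{F}^{cu}$, the tower (transitivity) property of Rokhlin disintegrations gives, for $\mu$-almost every $x$, that $\mu_x^c$ and $\mu_x^u$ coincide with the conditional measures along $\mathcal{F}^c|_L$ and $\mathcal{F}^u|_L$ of the measure $\mu_L^{cu}:=\mu_x^{cu}$ disintegrated \emph{inside} the leaf $L:=\mathcal{F}^{cu}(x)$; write $\mu=\int \mu_L^{cu}\,d\tilde\mu(L)$ over $K/\mathcal{F}^{cu}$. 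Inside a fixed $L$ we are exactly in the abstract setting of Section \ref{measurabletoolbox}: $(L,\mu_L^{cu})$ carries the two transverse compact foliations $\mathcal{F}^c|_L$, $\mathcal{F}^u|_L$ with the assumed product structure, and — using $\pi(\mathcal{F}^{cu}(x))=W^u(\pi(x))$ from Section \ref{x0} — $\pi|_L$ is the canonical projection $L\to L/\mathcal{F}^c$, while $\pi$ restricted to any single leaf $\mathcal{F}^u(z)\subset L$ is a homeomorphism onto $L/\mathcal{F}^c$.

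Next I would establish the leafwise dictionary. The proof of Lemma \ref{product} is symmetric under exchanging the roles of $c$ and $u$, so inside $L$ the system $\{\mu_z^c\}_{z\in L}$ is $u$-invariant if and only if $\{\mu_z^u\}_{z\in L}$ is $c$-invariant. Because $H^c_{z_0,z}$ slides $\mathcal{F}^u(z_0)$ onto $\mathcal{F}^u(z)$ along $\mathcal{F}^c$-leaves we have $\pi\circ H^c_{z_0,z}=\pi$ there, and since $\pi$ is injective on each $\mathcal{F}^u(z)$, $c$-invariance of $\{\mu_z^u\}_{z\in L}$ is equivalent to the map $z\mapsto \pi_*(\mu_z^u)$ being constant on $L$; integrating the disintegration $\mu_L^{cu}=\int_L \mu_z^u\,d(\cdot)$ then forces this constant to be $\pi_*(\mu_L^{cu})$. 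Hence: inside $L$, $\{\mu_z^c\}_{z\in L}$ is $u$-invariant if and only if $\pi_*(\mu_z^u)=\pi_*(\mu_L^{cu})$ for $\mu_L^{cu}$-almost every $z$. By Lemma \ref{quotient}, $\pi_*(\mu_z^{cu})=\nu_{\pi(z)}^u$ almost everywhere, so this leafwise condition reads: $\pi_*(\mu_z^u)=\nu_{\pi(z)}^u$ for a.e. $z\in L$.

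Finally I would glue the leaves together with Fubini. Using $\mu=\int \mu_L^{cu}\,d\tilde\mu(L)$ together with the transitivity identification of $\mu_z^u$ from the first paragraph, the condition ``$\pi_*(\mu_x^u)=\nu_{\pi(x)}^u$ for $\mu$-a.e. $x$'' holds if and only if for $\tilde\mu$-a.e. leaf $L$ it holds for $\mu_L^{cu}$-a.e. $z\in L$. Likewise, ``$\mu^c$ is $u$-invariant'' — that is, $\mu_y^c=(H^u_{x,y})_*\mu_x^c$ on a $\mu$-full set of pairs $(x,y)$ with $y\in\mathcal{F}^{cu}(x)$ — holds if and only if for $\tilde\mu$-a.e. $L$ the system $\{\mu_z^c\}_{z\in L}$ is $u$-invariant inside $L$. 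By the previous paragraph these two leafwise properties coincide, which is precisely the assertion of the proposition.

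The main obstacle I expect is not conceptual but measure-theoretic bookkeeping: carefully justifying the transitivity of the disintegrations, so that the global conditionals $\mu_x^c$, $\mu_x^u$ genuinely restrict to the leafwise ones for $\mu_L^{cu}$, and the two Fubini globalizations, matching null sets of $K$ with the union of (a null set of leaves $L$) and (leafwise null sets inside the remaining leaves). One also has to confirm that Lemma \ref{product} is genuinely reversible and that the quotient-space identifications of Section \ref{x0} are measure-theoretically compatible — the only nontrivial such compatibility, namely that $\pi|_L$ pushes $\mu_L^{cu}$ to $\nu_{\pi(x)}^u$, being exactly Lemma \ref{quotient}.
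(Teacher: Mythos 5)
Your proposal is correct and follows essentially the same route as the paper: localize to center-unstable plaques via essential uniqueness of disintegration, pass between $u$-invariance of $\{\mu^c_x\}$ and $c$-invariance of $\{\mu^u_x\}$ using Lemma \ref{product} (and its symmetric converse), and then identify $c$-invariance with $\pi_*\mu^u_x=\nu^u_{\pi(x)}$ via the quotient identification and Lemma \ref{quotient}. The only difference is cosmetic bookkeeping: you phrase the last step as ``$\pi_*\mu^u_z$ is constant on the plaque'' using $\pi\circ H^c=\pi$ and injectivity of $\pi$ on unstable plaques, whereas the paper verifies the same identity by evaluating $\mu^{cu}_x(\mathcal{F}^c(D))$ on sets $D\subset\mathcal{F}^u(x)$.
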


By coherent hypothesis we may speak about $H^u$ holonomy between two central leaves and consequently $\{\mu^c\}$ being $u-$invariant makes sense.
\begin{proof}
Let us prove ``if" part: Observe that by essential uniqueness of disintegration the family of conditional measures $\mu_{x}^u$ almost everywhere coincides with the disintegration of $\mu^{cu}$ along $\mathcal{F}^u.$ The same statement for $\mu^c$ holds. That is, the disintegration of $\mu^{cu}$ along the central palques coincides with $\mu^c_x.$ Observe that for any center-unstable plaque $\mathcal{F}^{cu}(x)$, the quotient by central plaques can be identified by the unstable plaque $\mathcal{F}^u(x).$

By lemma \ref{product} and invariance of $\mu^c$ by $u-$holonomy we conclude that $\mu^u$ is invariant by $\mathcal{F}^c$ holonomy.  This yields that for any $D \subset \mathcal{F}^u(x)$ we have $$\mu^{cu}_{x} (\mathcal{F}^c(D)) = \int_{\mathcal{F}^u(x)} \mu^u_z (H_{x,z}^c (D)) d \tilde{\mu}^{cu} (z)  = \mu_{x}^u(D)$$
where $   \mathcal{F}^c(D) = \bigcup_{z \in D} \mathcal{F}^c(z)$, $H_{x, z}^c$ is the central holonomy map between two unstable leaves and $\tilde{\mu}^{cu}$ the probability one the quotient space $\mathcal{F}^{cu}/\mathcal{F}^u$.
Observe that
$$
\mu^{cu}(\mathcal{F}^c(D)) = (\pi_* \mu^{cu}) (\pi(D)) = \nu^u_{\pi(x)} (\pi(D)).
$$
where the last equality comes from Lemma \ref{quotient}.
Comparing the above two equations we conclude that $\nu^u_{\pi(x)} = \pi_*(\mu^u_x)$.

To prove the ``only if" part, just observe that  $\nu^u_{\pi(x)} = \pi_* \mu^u_x$ implies 
$\mu^u$ is $c-$invariant and by lemma \ref{product} we conclude that $\mu^c$ is $u-$invariant.

\end{proof}






\section{Proof of Entropy criterium for invariance principle} \label{proofIP}

 Throughout this section we prove Theorem \ref{u-invariantprinciple}. Recall that $\mu$ denotes an invariant probability measure by $f$ and that $\pi_*(\mu)=\nu$.
 Fix $\{A^i_c\}$ a Markov partition of $f_c$. Denote by $A^i=\pi^{-1}(A^i_c)$,
then $\cA=\{A^{i}\}$ is a partition on the manifold $M$. We may assume that the boundary of each element of this partition has zero $\mu-$measure.

Fix $i=1,\dots,k$, for any $x\in A^i$, denote by $W^u_{loc}(\pi(x))$
the unstable plaque contained inside $A^i_{c}$, and by $\cF^u_{loc}(x)$ (unstable plaque) the connected component  of
the unstable leaf of $\cF(x)$ which intersects $A^i$ and contains $x$. In the proof we use four measurable partitions:
\begin{itemize}
\item Central foliation $\mathcal{F}^c$ is a foliation by compact leaves and so it is a measurable partition. The conditional measures of $\mu$ along this partition are denoted by $\{\mu^c_x\};$
\item $\xi_c^u=\{W^u_{loc}(\pi(x)); \pi(x) \in M_c\}$ is a measurable partition of $M_c$ by unstable plaques of $f_c$. We may disintegrate $\nu = \pi_* \mu$ along this partition and the conditional measures are denoted by $\{\nu^{u}_{\pi(x)}\}$;
\item $ \pi^{-1}(\xi_c^u)$ is a measurable partition of $M$ by $\mathcal{F}_{loc}^{cu}$ (center-unstable) plaques. The corresponding conditional measures of $\mu$ are denoted by $\{\mu^{cu}_x\}$
\item and $\xi^u=\{\cF^u_{loc}(x); x\in M\}$ is a measurable partition of $M$ by unstable plaques of $f$ and $\{\mu^u_x\}$ stands for the system of conditional measures of $\mu$.

\end{itemize}

Considering the conditional measures of $\mu$ along different measurable partition introduced above we define a new category of measures which we call ``$u-$Gibbs relative to measure $\nu$" or just by $Gibb^u_{\nu}$ states.
\begin{definition}\label{d.u state}
We say $\mu$ is a \emph{$Gibbs^u_{\nu}$-state} if $\pi_* \mu = \nu$ and
for $\mu-$almost every $x\in M$,
$$\pi_*\mu^u_x=\nu^u_{\pi(x)}.$$
\end{definition}
We denote by $\Gibb_{\nu}^u(f)$ the set of Gibbs$^u_{\nu}$-states of $f$.
Observe that by Proposition \ref{gibbs-uinvariant} all measures in $\Gibb_{\nu}^u(f)$ have $u-$invariant disintegration along the central foliation.

Recall that a measurable partition $\eta$ for a map $f$ is \emph{increasing} if $f\eta \preceq \eta$. Then
all the three partitions $\xi_c^u$, $\pi^{-1}(\xi_c^u)$ and $\xi^u$ are increasing. It is easy to see that $\xi^u$ is finer than $\pi^{-1}(\xi_c^u).$



\subsection{Partial entropy along expanding foliations} \label{entropyfoliation}

In this section, we recall the general definition of partial entropy along the expanding foliation (See \cite{L1}, \cite{LY2} and \cite{Y}.)

Let $f$ be a diffeomorphism, we say a foliation $\cF$ is \emph{$f$-expanding} if:
\begin{itemize}
\item $\cF$ is invariant;
\item $f$ is expanding along $\cF$.
\end{itemize}

\begin{remark}
By unstable manifold theorem, the unstable foliation $\cF^u$ is an expanding foliation of $f$.
It deserves to observe that, although $f_c$ is only a topological Anosov homeomorphism, we may
still consider $\cW^u$ as an \emph{expanding foliation} of $f_c$. Indeed, we can use a conjugacy
to identify it with a linear Anosov diffeomorphism $A_0$ and the unstable foliation is preserved by the conjugacy.
\end{remark}

For any invariant probability measure $\mu$ of $f$, we say a measurable partition $\xi$ is \emph{$\mu$ adapted} (sub-ordinated)
to $\mathcal{F}$ if the following conditions are satisfied:
\begin{itemize}
\item there is $r_0>0$ such that $\xi(x)\subset B^{\cF}_{r_0}(x)$ for $\mu$ almost every $x$, where $B^{\cF}_{r_0}(x)\subset \cF(x)$ is a ball of $\cF(x)$ with radius $r_0$
\item $\xi(x)$ contains an open neighborhood of $x$ inside $\cF(x)$;
\item $\xi$ is increasing, that is, for $\mu$ almost every $x$, $\xi(x)\subset f(\xi(f^{-1}(x)))$.
\end{itemize}

Then the \emph{$\mu$ partial entropy along expanding foliation $\cF$} is defined by
$$h_{\mu}(f,\cF)=H_{\mu}(f^{-1} \xi \mid \xi).$$

\begin{remark}
It is easy to check that $\xi_c^u$ is $\nu$ adapted to foliation $\cW^u$ and $\xi^u$ is
$\mu$ adapted to $\cF^u$. Then, by the definition,
\begin{equation}\label{eq.partialentropy}
h_\nu(f_c,\cW^u)=H_\nu(f_c^{-1} \xi_c^u\mid \xi_c^u) \text{ and } h_\mu(f,\cF^u)=
H_\mu(f^{-1}\xi^u\mid \xi^u).
\end{equation}
\end{remark}



\subsection{Proof of Theorem A}
 We use the notations introduced in beginning of this section: $\{A^i\}$ is a partition of $M$ into finitely many domains. Each $A^i$ is partitioned into stable, unstable and central plaques with the coherence property. We use abstract results obtained in measurable toolbox (Section \ref{measurabletoolbox}) for each $A^i.$ For each $A^i$ fix $x_i \in A^i$ which plays the role of $x_0$ in subsection \ref{x0}.


To simplify the notations we use $\mathcal{F}^{cu}_{loc}(t)$ to denote the atom of partition $\pi^{-1}(\xi_c^u)$ containing $t \in A^i$.
By definition,
\begin{align} \label{entropy}
 h_{\mu} (f, \mathcal{F}^u) &= \int_M -\log \mu_{z}^u ( f^{-1} \xi^u (z)) d \mu(z)  \\
 & = \sum_ i \int_{W^s_{loc}(\pi(x_i))} \int_{\mathcal{F}^{cu}_{loc}(t)}  - \log \mu_{z}^u (f^{-1} \xi^u (z)) d \mu_{t}^{cu} (z)  d \tilde{\mu} (t) \\
 &=  \sum_ i \int_{W^s_{loc}(\pi(x_i))} \int_{\mathcal{F}^{cu}_{loc}(t)}  - \log \mu_{z}^u (f^{-1} \xi^u (z)) d \mu_{t}^{cu} (z)     d \tilde{\nu} (t)
\end{align}

where the sum above is over all   $A^i$ and $f^{-1} \xi^u (x)$ stands for the element of the partition $f^{-1} \xi^u$ which contains $x.$ The second equality comes from the disintegration
$$
\mu= \int \mu^{cu} d \tilde{\mu}.
$$
For the third equality we identify the quotient of $A^i$ by the center-unstable plaques, with the stable plaque of $\pi(x_i)$
and recall that  the quotient measure $\tilde{\mu}$ can be identified with the quotient measure $\tilde{\nu}$ where $\displaystyle{\nu = \int \nu^u d \tilde{\nu}}$ (See the proof of Lemma \ref{quotient}.)

Now observe that $f_c^{-1} (\xi^u_{c})$ induces a partition on each element of $\xi^u_{c}.$ Taking pre images by $\pi$ we conclude that each $\mathcal{F}_{loc}^{cu}(t)$ is partitioned into finitely many subsets $\mathcal{F}_{loc}^{cu}(t) = \bigcup_{j} B_j$ where for each $j,$ $\pi(B_j)$ is an atom of $f_c^{-1} (\xi^u_{c}).$

\begin{figure}
\includegraphics[scale=0.3]{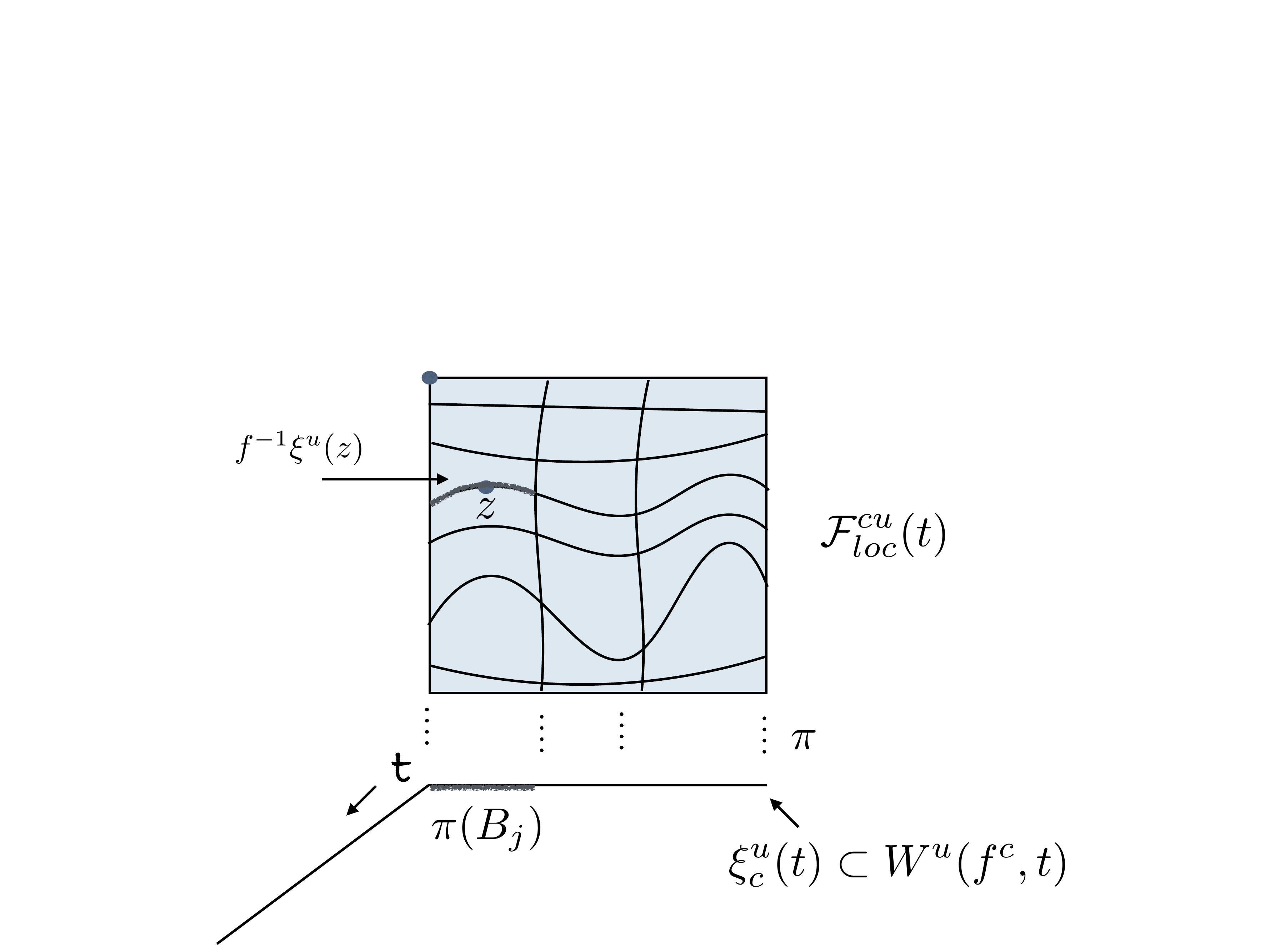}
\centering
\caption{}
\end{figure}

\begin{Claim} For any $t \in W^s(\pi(x_i))$ we have:
 $$
  \int_{\mathcal{F}_{loc}^{cu}(t)}  - \log \mu_{z}^u (f^{-1} \xi^u (z)) d \mu_{t}^{cu} (z) \leq \sum_j - \nu_t^u (\pi(B_j)) \log (\nu_t^u (\pi(B_j)).
  $$
\end{Claim}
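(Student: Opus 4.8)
The plan is to estimate the left-hand side by recognizing it as a conditional entropy of a partition, and then applying the elementary fact that conditioning does not increase entropy, together with the fact that the unstable holonomy pushes the fiberwise measures $\mu^u_z$ forward onto $\nu^u_t$ by the $Gibb^u$-structure we are \emph{not} assuming --- so we must instead work directly with the disintegration and use convexity (concavity of $-x\log x$, i.e. Jensen's inequality) to bring the integral over the center-unstable plaque inside the logarithm.

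First I would fix $t\in W^s(\pi(x_i))$ and recall that on the center-unstable plaque $\mathcal{F}^{cu}_{loc}(t)$ we have the decomposition $\mathcal{F}^{cu}_{loc}(t)=\bigcup_j B_j$ where $\pi(B_j)$ runs over the atoms of $f_c^{-1}(\xi^u_c)$ inside $W^u_{loc}(\pi(x_i))$. The key observation is that for $z\in B_j$, the atom $f^{-1}\xi^u(z)$ of the refined partition, intersected with the local unstable plaque $\mathcal{F}^u_{loc}(z)$, projects under $\pi$ into $\pi(B_j)$; more precisely, $\pi(f^{-1}\xi^u(z))$ is contained in the atom $\pi(B_j)$ of $f_c^{-1}\xi^u_c$. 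Hence $\mu^u_z(f^{-1}\xi^u(z))$ is, roughly, the conditional $\mu^u_z$-mass of a set whose $\pi$-image is $\pi(B_j)$; I would make this precise by writing the atom $\mathcal{F}^{cu}_{loc}(t)\cap f^{-1}\xi^u$-preimages and comparing with $\nu^u_t$.

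Next I would disintegrate $\mu^{cu}_t$ along the unstable plaques inside $\mathcal{F}^{cu}_{loc}(t)$, using the identification of the quotient $\mathcal{F}^{cu}_{loc}(t)/\mathcal{F}^u$ with $W^u_{loc}(\pi(x_i))$ as in Lemma \ref{quotient} (applied locally to $A^i$), so that the quotient measure is $\nu^u_t$. Writing the integral over $\mathcal{F}^{cu}_{loc}(t)$ as an iterated integral first over the quotient (with measure $\nu^u_t$) and then along unstable plaques, and noting that $-\log\mu^u_z(f^{-1}\xi^u(z))$ is, along each unstable plaque, exactly the integrand computing the local unstable entropy, I would group the contribution of each $B_j$: $\int_{B_j}-\log\mu^u_z(f^{-1}\xi^u(z))\,d\mu^{cu}_t(z)$. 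On $B_j$ the function $z\mapsto\mu^u_z(f^{-1}\xi^u(z))$ is the conditional mass of an atom, and by concavity of $\varphi(x)=-x\log x$ applied to the average of these masses weighted appropriately --- i.e. Jensen's inequality with respect to the disintegration of $\mu^{cu}_t$ along unstables restricted to $B_j$ --- the integral is bounded above by $\varphi$ evaluated at the total mass, which is $\nu^u_t(\pi(B_j))$ by the projection identity. Summing over $j$ yields the claimed bound $\sum_j -\nu^u_t(\pi(B_j))\log\nu^u_t(\pi(B_j))$.

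The main obstacle I anticipate is the bookkeeping in the second-to-last step: one must carefully verify that the quotient measure of $\mu^{cu}_t$ by the unstable plaques equals $\nu^u_t$ (a local version of Lemma \ref{quotient}), and that the refined-partition atoms $f^{-1}\xi^u(z)$ project correctly onto $f_c^{-1}\xi^u_c$-atoms $\pi(B_j)$, so that the Jensen step is legitimate --- in particular that $z\mapsto\mu^u_z(f^{-1}\xi^u(z))$, after integrating out the unstable direction, really does integrate to $\nu^u_t(\pi(B_j))$ over $B_j$. This is where the coherence of the three foliations and the increasing property of all the partitions involved are used; the inequality itself is then just concavity of $-x\log x$.
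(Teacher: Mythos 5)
Your overall architecture coincides with the paper's proof: split the plaque into the sets $B_j$, integrate along unstable plaques first, apply concavity of $g(x)=-x\log x$ (Jensen) transversally, and identify the resulting average with $\nu^u_t(\pi(B_j))$ via Lemma \ref{quotient}. However, two specific steps are wrong as written, and the first is precisely the point you defer to ``bookkeeping''. You identify the quotient $\mathcal{F}^{cu}_{loc}(t)/\mathcal{F}^u$ with $W^u_{loc}(\pi(x_i))$ and assert that the quotient measure is $\nu^u_t$, and you list as the fact to be verified that ``the quotient measure of $\mu^{cu}_t$ by the unstable plaques equals $\nu^u_t$''. That statement is false: the space of unstable plaques inside a center-unstable plaque is parametrized by the \emph{center} plaque $\mathcal{F}^c(t)$, and the corresponding quotient measure is a probability $\tilde{\mu}^{cu}_t$ on $\mathcal{F}^c(t)$ about which nothing is known a priori (it is essentially the object whose $u$-invariance the theorem is trying to establish). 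What Lemma \ref{quotient} actually gives is $\pi_*\mu^{cu}_t=\nu^u_t$, i.e. the quotient by the \emph{center} plaques; since each $B_j$ is $\pi$-saturated inside the plaque, this yields $\mu^{cu}_t(B_j)=\nu^u_t(\pi(B_j))$, and that is the only way $\nu^u_t$ enters. The correct order of operations is therefore: integrate along each unstable plaque to produce $g(\mu^u_\theta(B_j))$, then apply Jensen over $\theta\in\mathcal{F}^c(t)$ with respect to $\tilde{\mu}^{cu}_t$, obtaining $g(\mu^{cu}_t(B_j))=g(\nu^u_t(\pi(B_j)))$. If instead one applies Jensen, as you describe, ``with respect to the disintegration of $\mu^{cu}_t$ along unstables restricted to $B_j$'' directly to the integrand $-\log\mu^u_z(\cdot)$, convexity of $-\log$ gives an inequality in the opposite direction; the fiberwise integration, which produces the factor $x$ in front of $-\log x$, is exactly what makes the concavity argument point the right way.

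Second, your ``key observation'' is stated as a containment, $\pi(f^{-1}\xi^u(z))\subset\pi(B_j)$, but the argument needs the equality of atoms $f^{-1}\xi^u(z)=B_j\cap\mathcal{F}^u_{loc}(z)$ for $z\in B_j$ (which holds here because $\pi$ restricted to an unstable plaque is a homeomorphism onto the corresponding $W^u_{loc}$ and conjugates $f$ to $f_c$), so that $\mu^u_z(f^{-1}\xi^u(z))=\mu^u_\theta(B_j)$ on $B_j\cap\mathcal{F}^u_{loc}(\theta)$. With a strict containment you would only get $-\log\mu^u_z(f^{-1}\xi^u(z))\geq-\log\mu^u_\theta(B_j)$, again the wrong direction for the claimed upper bound. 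Once these two points are corrected, your argument becomes exactly the proof given in the paper.
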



To prove the above claim (See the above figure), first by lemma \ref{quotient} we obtain that for each $z \in B_j$
$$\nu^u_{t} (\pi(B_j)) = \mu_t^{cu} (B_j) = \int_{\mathcal{F}^{c}(t)} \mu^u_{\theta} (B_j)  d \tilde{\mu}_{t}^{cu} (\theta),
$$
where $\tilde{\mu}_t^{cu}$ is the measure on $\mathcal{F}^{cu}_{loc}(t)/\xi^u\approx \mathcal{F}^c(t)$.

Take $g(x)= -x \log x$ for $x > 0$ and apply Jensen inequality to obtain:
\begin{align} \label{cimabaixo}
\int_{\mathcal{F}^{c}(t)} g (\mu_ {\theta}^u (B_j))  d \tilde{\mu}_{t}^{cu} (\theta) \leq g (\int_{\mathcal{F}^{c}(t)} \mu^u_{\theta} (B_j)  d \tilde{\mu}_{t}^{cu} (\theta) ) = - \mu_t^{cu} (B_j) \log (\mu_t^{cu}(B_ j)).
\end{align}


Now,

\begin{align*}
\int_{\mathcal{F}^{cu}(t)}  - \log \mu_{z}^u (f^{-1} \xi^u (z)) d \mu_{t}^{cu} (z)  = \\
=\sum_j \int_{\mathcal{F}^{cu}(t)}  - \mathcal{X}_{B_j}(z)\log \mu_{z}^u (f^{-1} \xi^u (z)) d \mu_{t}^{cu} (z)\\
\sum_j \int_{\mathcal{F}^{c}(t)} \int_{\mathcal{F}^u_{loc}(\theta)} -\mathcal{X}_{B_j}(z) \log \mu_{z}^u (f^{-1} \xi^u (z)) ) d\mu^u_{\theta}(z) d \tilde{\mu}_{t}^{cu} (\theta) =
\\
\sum_j \int_{\mathcal{F}^{c}(t)} -\mu^u_\theta(B_j) \times (\log \mu_{\theta}^u (B_j) ) d \tilde{\mu}_{t}^{cu} (\theta) =
\\
= \sum_j  \int_{\mathcal{F}^{c}(t)} g( \mu^u_{\theta} (B_j) ) d \tilde{\mu}_{t}^{cu} (\theta)  \leq^{\text{by}(\ref{cimabaixo})}
\\
 \sum_j  -\mu_t^{cu} (B_j) \log (\mu_t^{cu} (B_j)) =  \\
 \sum_j -\nu_t^u (\pi(B_j)) \log (\nu_t^u (\pi(B_j))
\end{align*}

and we completed the proof of the claim.

Now taking integral from the both sides of the expressions in the claim with respect to $\tilde{\nu}$  and using \eqref{entropy} and summing over all $A^i$ we obtain that
$$h_{\mu} (f,\mathcal{F}^u) \leq H_\nu(f_c^{-1}\xi^u_c\mid \xi^u_c) = h_{\nu} (f_c).$$

Indeed, $h_{\nu} (f_c) = H_ {\nu} (f_c^{-1} \xi^u_c | \xi^u_c)$ and
$$  H_ {\nu} (f_c^{-1} \xi^u_c | \xi^u_c) = \sum_i  \int_{\mathcal{W}^s_{\loc} (\pi(x_i))} \sum_{j} - \nu^u_t (\pi(B_j)) \log (\nu^u_t(\pi(B_j))).$$
Observe that the above sum is over $A^i_c.$

When $h_{v} (f_c) = h_{\mu}  (f,\mathcal{F}^u)$, we must have equality in the Jensen inequality.
Hence, we shown that $\pi_*(\mu^u_{x})=\nu^u_{\pi(x)}$ restricting on the sub algebra generated by $f^{-1}\xi^u$.

Because
$$h_\mu(f^n,\mathcal{F}^u)=nh_\mu(f,\mathcal{F}^u)=nh_{\nu}(f^c) = h_{\nu} (f_ c^n),$$
applying a similar argument as above, one can show that
$\pi_*(\mu^u_{x})=\nu^u_{\pi(x)}$ restricting on the sub algebra generated by
$\cB_0=\{f^{-n}\xi^u\}_{n\in \mathbb{N}}$. Observe that $\cB_0$ generates the Boreal $\sigma$-algebra of every $\xi^u(x)$,
hence we show that
$\pi_*(\mu^u_x)=\nu^u_{\pi(x)}$, as claimed.

Now that we have proved Theorem \ref{u-invariantprinciple}, let us show how to conclude the proof of Corollary \ref{exp-invariantprinciple}.

\begin{proof} (of Corollary \ref{exp-invariantprinciple})
By Ledrappier-Young \cite{LY} we have $$h_{\mu}(f) = h (\mu, \mathcal{F}^u).$$

Indeed, the authors in \cite{LY2} define the notion of entropy $h_i$ along $i$-th unstable manifolds $W^i$ for $1 \leq i \leq u.$ Here
$$
W^i(x) = \{y \in M, \limsup_{n \rightarrow \infty} \frac{1}{n} \log d(f^{-n}(x), f^{-n}(y)) \leq - \lambda_i    \}
$$
and $\lambda_1  > \lambda_2 \cdots > \lambda_u$ are the positive Lyapunov exponents of $\mu.$
In particular, in Theorem (C'), they proved that $h_u = h_{\mu}(f).$ See item (iii) after theorem (C') in \cite{LY2}.

Here as the central Lyapunov exponents are non-positive we conclude that $W^u$ coincides with unstable foliation $\mathcal{F}^u.$  Again using \cite{LY2} we see that $h_u = h (\mu, \mathcal{F}^u) $ which yields $h_{\mu}(f) = h (\mu, \mathcal{F}^u).$

As $f_c$ is a factor of $f$ we have that $h_{\nu} (f_c) \leq h_{\mu} (f)$ and this implies $$h_{\mu} (f, \mathcal{F}^u) = h_{\nu} (f_c).$$ Now Theorem \ref{u-invariantprinciple} implies that $\mu$ is $u-$invariant.
\end{proof}

Now, let us give the proof of Corollary \ref{onedimensional}.
\begin{proof} (of Corollary \ref{onedimensional})
If the central foliation is one dimensional then $h_{\mu}(f) = h_{\nu}(f_c).$ Indeed, by Ledrappier-Walter's variational principle \cite{LW},
\begin{equation} \label{LW}
 \sup_{\hat{\mu}: \pi_ * \hat{\mu} = \nu} h_{\hat{\mu}} (f) = h_{\nu}(f_c) + \int_{M^c} h(f, \pi^{-1} (y)) d\nu(y).
\end{equation}
Since each $\pi^{-1} (y)$ is a circle and its iterates have bounded length we have that $h(f, \pi^{-1} (y)) =0$ that is, fibers does not contribute to the entropy. Hence, by the above equality and the well-known fact that $h_{\mu} (f) \geq h_{\nu} (f_c)$ we conclude that $h_{\mu}(f) = h_{\nu}(f_c)$ and the corollary is immediate from the Theorem \ref{u-invariantprinciple}.

\end{proof}

Finally, let us prove the corollary \ref{invariancelimit}.  By Theorem \ref{u-invariantprinciple}, $\mu_n \in Gibb^u_{\nu}(f_n)$ implies that $h_{\mu_n}(f_n , \mathcal{F}^u_{n}) = h_{\nu}(A)$ where $\mathcal{F}^u_n$ represents the unstable foliation of $f_n.$ By the upper semi-continuity property  (proved in \cite{Y}) we conclude that
$$\lim sup_{n \rightarrow \infty}  h_{\mu_n} (f_n , \mathcal{F}^u_{n})  \leq  h_{\mu} (f , \mathcal{F}^u).
$$
which implies 
$$
h_{\nu}(A)  \leq  h_{\mu} (f , \mathcal{F}^u).
$$
Again using Theorem \ref{u-invariantprinciple}, we have $\mu \in Gibb^u_{\nu}(f).$

\section{Proof of Rigidity of high entropy measures} \label{proofhighentropy}

In this section we prove Theorems \ref{main.nonunihyp} and \ref{main.converging}.
Let us recall some facts about  measures of maximal entropy in our context. As $f_c$ is a transitive Anosov homeomorphism it admits a unique measure of maximal entropy.
From now on, $\nu$ denotes the unique maximal measure of $f_c$ and $\nu^u_{\pi(x)}$ denotes the conditional measure on $W^u_{loc}(\pi(x))$
of $\nu$ corresponding to the measurable partition $\xi_c^u$. Denote by
$$H^s_{\pi(x),\pi(y)}: W^u_{loc}(\pi(x))\to W^u_{loc}(\pi(y)).$$
the holonomy map in each Markov component induced by the stable foliation $\cW^s$. The following result is classical (for instance by means of Margulis construction of measures of maximal entropy): the measure of maximal entropy for $f_c$ has local product structure.
\begin{lemma}
For $\nu$ almost every points $\pi(x), \pi(y) \in A^i_c$ ($i=1,\dots, k$),
$$(H^s_{\pi(x),\pi(y)})_*(\nu^u_{\pi(x)})=\nu^u_{\pi(y)}.$$ A similar statement holds for the disintegration of $\nu$ along stable plaques.
\end{lemma}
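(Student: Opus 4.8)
The plan is to identify the measure of maximal entropy $\nu$ of the transitive topological Anosov homeomorphism $f_c$ with the Bowen--Margulis measure and use the classical fact that the latter disintegrates as a product of conditional measures along the stable and unstable topological foliations, weighted by the Margulis system of measures. Concretely, after using the conjugacy $h$ that identifies $f_c$ with a linear Anosov diffeomorphism $A_0$ on a torus (which exists because $f_c$ is transitive, as recalled in the Preliminaries), the stable and unstable foliations $\cW^s,\cW^u$ of $f_c$ are carried to the corresponding foliations of $A_0$, and $\nu$ is carried to the (unique) measure of maximal entropy of $A_0$. Thus it suffices to prove the statement for $A_0$, where the unstable conditionals are (normalized restrictions of) Lebesgue measure on affine unstable subspaces and the stable holonomy maps between local unstable plaques inside a Markov rectangle are affine; affine maps send normalized Lebesgue to normalized Lebesgue, which gives the claim. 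Equivalently, and perhaps more intrinsically, I would invoke the Margulis construction directly: there exist families of measures $\{m^u_{\pi(x)}\}$ on unstable leaves and $\{m^s_{\pi(x)}\}$ on stable leaves with $(f_c)_* m^u = e^{h_{top}(f_c)} m^u$, $(f_c)_* m^s = e^{-h_{top}(f_c)} m^s$, each invariant under the corresponding holonomies (this invariance is built into the Margulis construction), and such that locally $\nu \asymp m^s \times m^u$ via the product chart. The stable-holonomy invariance of the $m^u$ family then descends to the conditional measures $\nu^u_{\pi(x)}$, which are exactly the normalized restrictions of $m^u$ to the Markov unstable plaques.

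The key steps, in order, would be: (i) recall/cite that $\nu$ is the Bowen--Margulis measure of $f_c$ and admits, in each Markov rectangle $A^i_c$, a product decomposition $d\nu = d\tilde\nu^s(\xi^s)\, d\nu^u_{\xi^s}(\cdot)$ with $\tilde\nu^s$ the quotient measure on the stable plaque through $\pi(x_i)$; (ii) show that the unstable conditionals obtained this way coincide $\nu$-a.e.\ with the conditionals $\nu^u_{\pi(x)}$ of the measurable partition $\xi^u_c$, by essential uniqueness of disintegration (this is the same kind of bookkeeping already used in Lemma~\ref{quotient} and Proposition~\ref{gibbs-uinvariant}); (iii) establish the stable-holonomy invariance either by transporting to the linear model $A_0$ and using affinity of the holonomies, or by quoting the holonomy-invariance of the Margulis family; (iv) run the mirror argument with $\cW^s$ and $\cW^u$ interchanged (equivalently, apply the same reasoning to $f_c^{-1}$, whose measure of maximal entropy is again $\nu$) to get the statement for the stable conditionals.

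The main obstacle is purely a matter of matching two a priori different objects: the ``conditional measures of $\nu$ along $\xi^u_c$'' (defined via Rokhlin disintegration of a measurable partition) and the ``Margulis conditional measures on local unstable plaques'' (defined geometrically). One must check these agree $\nu$-a.e., which is where essential uniqueness of disintegration does the work, but one has to be slightly careful that $\xi^u_c$ is exactly the partition into Markov unstable plaques (which it is, by construction at the start of Section~\ref{proofIP}) and that the Margulis decomposition is taken with respect to the same Markov rectangles. I expect no real difficulty beyond this identification, since the holonomy invariance of Bowen--Margulis measures for topological Anosov systems with Markov partitions is standard; I would simply cite \cite{Hi} and the Margulis construction rather than reprove it.
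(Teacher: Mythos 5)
Your proposal is correct and matches the paper's treatment: the paper gives no argument at all for this lemma, simply declaring it classical ``for instance by means of Margulis construction of measures of maximal entropy,'' which is exactly your second route (holonomy-invariant Margulis families plus essential uniqueness of disintegration to identify them with the Rokhlin conditionals on the Markov unstable plaques). Your alternative route via the topological conjugacy with a linear Anosov automorphism $A_0$ is also consistent with the identification the paper itself uses in the remark on expanding foliations, so both versions fill in details the paper leaves implicit rather than diverging from it.
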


In particular, fixing  $ p_i \in A^i_c$, $\nu\mid A^c_i$ can be written as
\begin{equation}\label{eq.productforquotient}
\nu\mid A_c^i= \int_{W^s(p_i)} (H^s_{p_i, q})_*(\nu^u_{p_i})d\nu^{s}(q),
\end{equation}
where $\nu^s$ is the quotient measure on the quotient space $A^i_c/\xi_c^u\cong W^s_{loc}(p_i)$.

\subsection{Some properties of Gibbs measures}
The next proposition is formulated for measures in $\Gibb_{\nu}^u(f)$ such that $\nu$ is the maximal entropy measure of $f_c.$ Although the proposition holds for all invariant measures $\nu$, we formulated it in the case where $\nu$ has local product structure which is sufficient for our purposes.

\begin{proposition}\label{p.Gibbs u state}
Let $f$ be as in Theorem (B) and $\nu$ be the measure of maximal entropy for $f_c$;  then
\begin{itemize}
\item[(a)] $\Gibb_{\nu}^u(f)$ is a compact convex set in the weak-* topology and the extreme points are ergodic;
\item[(b)] For each $\mu\in \Gibb_{\nu}^u(f)$, almost every ergodic component of $\mu$ belongs to $\Gibb_{\nu}^u(f)$.
\end{itemize}
\end{proposition}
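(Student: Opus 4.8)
The plan is to reduce both statements to the defining relation $\pi_*\mu^u_x = \nu^u_{\pi(x)}$, which is an \emph{affine} constraint on $\mu$ once we fix a disintegration apparatus. First I would set up, once and for all, the measurable partition $\xi^u$ into unstable plaques and note that the map $\mu \mapsto \{\mu^u_x\}$ and hence $\mu \mapsto \pi_*\mu^u_x$ behaves affinely: if $\mu = \int \mu_\omega\, d\mathbb{P}(\omega)$ is any measurable convex combination of invariant probabilities all projecting to $\nu$, then the conditional measures of $\mu$ along $\xi^u$ are the corresponding averages of the $\mu_\omega^u$'s (weighted by the appropriate Radon--Nikodym factor coming from the quotient by $\xi^u$). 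This is the essentially-unique-disintegration argument already used repeatedly in Section \ref{measurabletoolbox}. Convexity of $\Gibb^u_\nu(f)$ and statement (b) are then formal consequences: if $\mu \in \Gibb^u_\nu(f)$ then $\pi_*\mu^u_x = \nu^u_{\pi(x)}$ for $\mu$-a.e.\ $x$, and by the averaging identity this forces $\pi_*(\mu_\omega)^u_x = \nu^u_{\pi(x)}$ for $\mathbb P$-a.e.\ $\omega$ and $\mu_\omega$-a.e.\ $x$, i.e.\ $\mu_\omega \in \Gibb^u_\nu(f)$ for a.e.\ ergodic component.

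For compactness in the weak-$*$ topology, the cleanest route is to invoke Corollary \ref{invariancelimit} (or rather its proof via Theorem \ref{u-invariantprinciple} and upper semicontinuity of partial entropy from \cite{Y}): membership in $\Gibb^u_\nu(f)$ is equivalent, by Theorem \ref{u-invariantprinciple}, to the entropy identity $h_\mu(f,\mathcal F^u) = h_\nu(f_c)$. Since $h_\nu(f_c)$ is a fixed constant and $\mu \mapsto h_\mu(f,\mathcal F^u)$ is upper semicontinuous while always $\le h_\nu(f_c)$, the set where equality holds is closed; combined with the fact that $\{\mu : \pi_*\mu = \nu\}$ is weak-$*$ closed (continuity of $\pi_*$) and the space of $f$-invariant probabilities is compact, we get that $\Gibb^u_\nu(f)$ is compact. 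Being a compact convex subset of the space of invariant measures, it has extreme points by Krein--Milman, and each extreme point must be ergodic: if an extreme $\mu$ were non-ergodic, its ergodic decomposition would (by part (b), just proved) express it as a genuine convex combination of distinct elements of $\Gibb^u_\nu(f)$, contradicting extremality.

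The step I expect to require the most care is the affine behaviour of the conditional measures $\mu^u_x$ under convex combination and, closely related, the measurability issues in the ergodic decomposition: one must check that the disintegration of $\int \mu_\omega\, d\mathbb P(\omega)$ along $\xi^u$ really is obtained by integrating the disintegrations of the $\mu_\omega$, which involves keeping track of the quotient measures on $M/\xi^u$ and using that all $\mu_\omega$ share the same projection $\nu$ (so the base measures on the quotient are comparable). I would handle this exactly as in the proof of Lemma \ref{quotient} and Proposition \ref{gibbs-uinvariant}, testing against continuous functions and appealing to essential uniqueness of disintegration. Everything else is soft: weak-$*$ compactness of invariant measures, continuity of $\pi_*$, the cited upper semicontinuity of $h_\cdot(f,\mathcal F^u)$, and Krein--Milman.
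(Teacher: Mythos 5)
Your compactness argument is fine, and it is in fact a legitimately different route from the paper's: you characterize membership in $\Gibb^u_\nu(f)$ by the entropy identity $h_\mu(f,\mathcal F^u)=h_\nu(f_c)$ (Theorem \ref{u-invariantprinciple}) and close it up using upper semicontinuity of the partial entropy, exactly as in the proof of Corollary \ref{invariancelimit}; the paper instead uses that $\nu$, being the maximal entropy measure, has local product structure, so that in Markov-box coordinates every $\mu\in\Gibb^u_\nu(f)$ is literally a product $d\nu^u_i(a_2)\,d\tilde\mu_{i}(a_1\times a_3)$ with the fixed factor $\nu^u_i$, and products with a fixed factor pass to weak-$*$ limits. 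Your route has the advantage of not needing the product structure of $\nu$; the paper's is more elementary and self-contained. Convexity via affinity of disintegrations, and the deduction ``extreme points are ergodic'' from (b), are also fine.

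The genuine gap is in part (b). The averaging identity only goes one way: if $\mu=\int\mu_\omega\,d\mathbb P(\omega)$, then $\mu^u_x$ is a convex combination $\int w_\omega(\xi^u(x))\,(\mu_\omega)^u_x\,d\mathbb P(\omega)$ with weights given by Radon--Nikodym derivatives on the quotient, and from $\pi_*\mu^u_x=\nu^u_{\pi(x)}$ you cannot conclude $\pi_*(\mu_\omega)^u_x=\nu^u_{\pi(x)}$ for a.e.\ $\omega$: an average of probability measures equal to $\nu^u_{\pi(x)}$ does not force each term to equal it (think of $\tfrac12(\delta_0+\delta_1)$). What rescues the statement for the \emph{ergodic} decomposition is the nontrivial fact that the unstable-plaque partition $\xi^u$ refines (mod $0$) the partition into ergodic components, i.e.\ each plaque lies inside a single ergodic component; this is exactly what the paper invokes (Ledrappier--Young, Section 6.2, and Ledrappier--Strelcyn, Proposition 2.6). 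With it, essential uniqueness of disintegration gives $\mu^u_x=(\mu_{P})^u_x$ for a.e.\ $x$, where $P=\xi_{erg}(x)$, so the weights are $0$--$1$ and (b) follows; you also need the (easy, but worth stating) remark that a.e.\ ergodic component projects to $\nu$ because $\nu$ is ergodic. Alternatively you could repair (b) within your own framework by using affinity of $\mu\mapsto h_\mu(f,\mathcal F^u)$ over the ergodic decomposition together with the inequality $h_{\mu_\omega}(f,\mathcal F^u)\le h_\nu(f_c)$ from Theorem \ref{u-invariantprinciple}, but as written the ``formal consequence'' claim does not stand.
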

\begin{proof}
First consider a coordinate of $A^i_c$
 $$\Phi^i_c: [0,1]^s\times [0,1]^u=\mathcal{I}_c \to A^i_c$$
such that for any $a_c=(a_1,a_2)\in \mathcal{I}_c$ and $x_c=\Phi^i_c(a_c)$:
\begin{itemize}
\item[(i)] $\Phi^i_c(a_1\times [0,1]^u)=W^u_{loc}(x_c)$;
\item[(ii)] $\Phi^i_c([0,1]^s\times a_2)=W^s_{loc}(x_c)$.
\end{itemize}
In these coordinate, the $\Phi^i_c$ image of every horizontal plane is a stable plaque,
and $\Phi^i_c$ image of every vertical plane is a unstable plaque.
Then by \eqref{eq.productforquotient}, the disintegrations of $\nu_i=(\Phi^i_c)^{-1}_*(\nu\mid A^i_c)$ along
the foliation $\{a_1\times [0,1]^u\}_{a_1\in [0,1]^s}$ are all the same, which we denote by $\nu^u_i$;

In the following, we also need a coordinate for $A^i$.
We take each $A^i_c\subset M_c$ with small diameter, such that the central bundle is trivial restricted on $A^i_c$.
Then we can take a continuous coordinate of $A^i$,
$$\Phi^i: [0,1]^s\times [0,1]^u \times S^1=\mathcal{I} \to A^i$$
such that for any $a=(a_1,a_2,a_3)\in \mathcal{I}$ and $x=\Phi^i(a)$:
\begin{itemize}
\item[(i)] $\Phi^i(a_1\times [0,1]^u\times a_3)=\cF^u_{loc}(x)$;
\item[(ii)] $\Phi^i(a_1\times [0,1]^u\times S^1)=\cF^{cu}_{loc}(x)$;
\item[(iii)] $\Phi^i([0,1]^s\times a_2\times S^1)=\cF^{cs}_{loc}(x)$;
\item[(iv)] $\Phi^i(a_1\times a_2 \times S^1)=\cF^c(x)$.
\end{itemize}
Of course, $\mathcal{F}^s$ and $\mathcal{F}^u$ are not necessarily jointly integrable,
If we denote by $\pi_{3}: \mathcal{I}\to [0,1]\times [0,1]$: $\pi_3(a_1\times a_2\times a_3)=a_1\times a_2$,
then it is clear that, we have
\begin{equation}\label{eq.commute}
\Phi^i_c\circ \pi_3=\pi\circ \Phi^i.
\end{equation}

From the definition of $\Gibb^u_\nu$ and \eqref{eq.commute}, the disintegrations of $\mu_i=(\Phi^i)^{-1}_*(\mu\mid A^i)$ along
the foliation
$$\{a_1\times [0,1]^u\times a_3\}_{a_1\in [0,1]^s,a_3\in S^1}$$
equal to $(\Phi^i)^{-1}_*(\mu^u_{\cdot})$, which are all the same and coincide to $\nu^u_i$.

We first prove that $\Gibb^u_\nu(f)$ is compact. Let $\mu_n\in\Gibb^u_\nu$, and $\mu_n\overset{\text{weak}*}{\to} \mu$.
We are going to show that $\mu$ also belongs to $\Gibb^u_\nu$. Then by the coordinate, it is sufficiently to show that, the disintegration of measure $\mu_i$ along the foliation
$$\{a_1\times [0,1]^u\times a_3\}_{a_1\in [0,1]^s,a_3\in S^1}$$
equal to $\nu^u_i$. This is obvious, because each measure $\mu_{n,i}=(\Phi^i)^{-1}(\mu_n\mid A_i)$ can be written as a product
$$d\mu_{n,i}((a_1,a_2,a_3))=d\nu^u_i(a_2) d\tilde{\mu}_{n,i}(a_1\times a_3), $$
where $\tilde{\mu}_{n,i}$ is a probability measure on the space $[0,1]^s\times 0\times S^1$.
Then its limit, $\mu_i$ can be written in the same manner.

Now we are going to prove the second part.
Let $\mu\in \Gibb^u_\nu$ and
write the ergodic decomposition of $\mu$ by
$$\mu= \int_{M / \xi_{erg}} \mu_P d\tilde{\mu}(P),$$ where $\xi_{erg}$ is the measurable partition of $M$ into ergodic components of $\mu.$

Now, we recall the crucial fact   that $\xi^u$ is finer than $\xi_{erg}$. See \cite{LY} (Section 6.2) and  proposition 2.6 in \cite{LS}.
For $x\in M$, denote by $\xi^u (x)$ and $P= \xi_{erg}(x)$ the elements of partitions $\xi^u$ and $\xi_{erg}$
which contain $x$ respectively. Then by the essential uniqueness of the disintegration, for $\mu$ almost
every $x$, the disintegration of $\mu$ along the partition $\xi^u$ on the element $\xi^u (x)$, denoted by $\mu^u_x$,
coincides with the disintegration of the ergodic component $\mu_{P}$ along the partition $\xi^u$.  This  means that, for $\tilde{\mu}$ almost every $P$, the disintegration of $\mu_{P}$ along the partition $\xi^u$
equal to $\mu^u_{x}=\pi^{-1}(\nu^u_{x_c})$ for $\mu_{P}$ almost every $x$, and hence $\mu_{P}$ belongs to $\Gibb^u_\nu$.
\end{proof}

\begin{proposition}\label{p.invariant principle}

 Let $\omega$ be an ergodic maximal measure of $f$ with non-positive (resp. non-negative)
center exponent then $\pi_*(\omega)=\nu$ and $\omega\in \Gibb^u_{\nu}(f)$ (resp. $\omega\in \Gibb^s_\nu(f)$). Moreover, if
$\omega\in \Gibb^u_\nu(f)$ , then $\mu$ is $u-$invariant.

\end{proposition}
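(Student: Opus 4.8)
\textbf{Proof proposal for Proposition \ref{p.invariant principle}.}

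The plan is to combine Corollary \ref{onedimensional} with the Ledrappier--Walters variational principle \eqref{LW} and with Theorem \ref{dichotomy}. First I would verify the projection statement $\pi_*(\omega)=\nu$. Since $f_c$ is a transitive topological Anosov homeomorphism, it has a unique measure of maximal entropy, and $h_{\pi_*(\omega)}(f_c)\le h_{top}(f_c)$. Because the center foliation is one-dimensional with compact (circle) leaves, the fibers do not contribute to the entropy: by \eqref{LW} we have $h_\omega(f)=h_{\pi_*(\omega)}(f_c)$. As $\omega$ is a maximal measure of $f$, and since $h_{top}(f)=h_{top}(f_c)$ (again because the fiber term in \eqref{LW} vanishes and the supremum over the fiber of $\pi(x_0)$ is realized), we get $h_{\pi_*(\omega)}(f_c)=h_{top}(f_c)$, forcing $\pi_*(\omega)=\nu$ by uniqueness.

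Next I would establish $\omega\in\Gibb^u_\nu(f)$ when the center exponent is non-positive. With $\nu=\pi_*(\omega)$ in hand, Corollary \ref{onedimensional} says that $\omega\in\Gibb^u_\nu(f)$ if and only if $h_\omega(f)=h_\omega(f,\mathcal{F}^u)$. So it suffices to prove this last equality. Here is where the sign hypothesis on the center exponent enters: by the argument already given in the proof of Corollary \ref{exp-invariantprinciple} (invoking Ledrappier--Young \cite{LY2}, Theorem (C')), when the center Lyapunov exponents of an ergodic measure are non-positive the top Ledrappier--Young unstable manifold $W^u$ coincides with the strong unstable foliation $\mathcal{F}^u$, and therefore $h_\omega(f)=h_u=h_\omega(f,\mathcal{F}^u)$. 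This gives $\omega\in\Gibb^u_\nu(f)$. The case of non-negative center exponent is symmetric: apply the same reasoning to $f^{-1}$, whose strong unstable foliation is $\mathcal{F}^s$, to conclude $\omega\in\Gibb^s_\nu(f)$.

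Finally, the "moreover" clause: if $\omega\in\Gibb^u_\nu(f)$ then by Proposition \ref{gibbs-uinvariant} the disintegration $\{\omega^c_x\}$ along the center foliation is $u$-invariant, which is precisely the statement that $\omega$ (rather than $\mu$, as the current phrasing says --- this appears to be a typo) is $u$-invariant. So this part is immediate from the equivalence recorded in Proposition \ref{gibbs-uinvariant} and the definition of $\Gibb^u_\nu(f)$. The main obstacle, and the only nontrivial input, is the identification $h_\omega(f)=h_\omega(f,\mathcal{F}^u)$ under the center-exponent sign hypothesis; this rests on the Ledrappier--Young machinery and, for the $C^2$ (or $C^{1+\alpha}$ with one-dimensional center) setting, on the Lipschitz property of unstable holonomies inside center-unstable plaques as discussed after \eqref{LW} and in \cite{LY,Brown}. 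Everything else is bookkeeping with the variational principle and the uniqueness of the maximal measure of $f_c$.
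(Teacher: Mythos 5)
Your proof is correct and takes essentially the same route as the paper: the projection statement $\pi_*(\omega)=\nu$ via the Ledrappier--Walters formula with vanishing fiber entropy and uniqueness of the maximal measure of $f_c$, and the Gibbs property via Theorem \ref{u-invariantprinciple} combined with the Ledrappier--Young equality $h_\omega(f)=h_\omega(f,\mathcal{F}^u)$ under the sign hypothesis, with the ``moreover'' clause read off from Proposition \ref{gibbs-uinvariant}. The only cosmetic difference is that you route through Corollary \ref{onedimensional} and rederive the Ledrappier--Young step, whereas the paper directly invokes Corollary \ref{exp-invariantprinciple} (which already packages it) together with Proposition \ref{gibbs-uinvariant}; your remark that ``$\mu$'' in the statement should read ``$\omega$'' is also correct.
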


\begin{proof}
To prove the first part of the proposition take $\omega$ ergodic measure of maximal entropy with non-positive center exponent. By Ledrappier-Walter's variational principle and one dimensionality of central foliation, $\pi_*(\omega)$ is the measure of maximal entropy for $f_c$, that is $\pi_*(\omega) = \nu.$

By Rokhlin disintegration theorem, there is a system of conditional probability measures along center foliation. By invariance principle (Corollary \ref{exp-invariantprinciple}) $\{\omega_x\}$ is $u-$invariant which is the same to say $\omega \in Gibb^u_{\nu}$ by Proposition \ref{gibbs-uinvariant}.

The second part of the proposition is immediate from Proposition \ref{gibbs-uinvariant}.

\end{proof}

\begin{corollary}
If $\omega \in \Gibb^u_\nu(f) \cap \Gibb^s_\nu(f)$ then  $f$ is of rotation type, and
there is a family of conditional measures
$\omega^c$ along the center foliation, such that $\mu^c$ varies continuously respect to the
center leaves, and is invariant under stable, and unstable holonomies $H^{*}, *=s,u$.
\end{corollary}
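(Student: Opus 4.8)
The plan is to combine the two invariance statements with the existing structural results about $\SPH_1(M)$. First I would observe that if $\omega \in \Gibb^u_\nu(f)\cap\Gibb^s_\nu(f)$, then by Proposition \ref{gibbs-uinvariant} the system of conditional measures $\{\omega^c_x\}$ along the center foliation is simultaneously $u$-invariant and $s$-invariant (as a measurable object, defined on a full $\nu$-measure set). Since $\nu$ is the measure of maximal entropy for the transitive topological Anosov homeomorphism $f_c$, Lemma \ref{p.Gibbs u state}-preceding (the local product structure lemma) tells us that $\nu$ has local product structure. The point of subsection \ref{invariance2} is exactly that, under local product structure of $\nu$, the combination of a measurable $u$-invariant and a measurable $s$-invariant family of conditionals can be upgraded, via the Hopf-type argument of Avila--Viana (Theorem (D) and Proposition 4.8 in \cite{AV}), to a family $\{\omega^c_\xi\}_{\xi\in M_c}$ that is defined \emph{everywhere}, depends \emph{continuously} on the center leaf, and is genuinely invariant under both holonomies $H^s$ and $H^u$. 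This gives the second assertion of the corollary.

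Next I would derive that $f$ is of rotation type. The continuous, holonomy-invariant family $\{\omega^c_\xi\}$ gives, on each one-dimensional center leaf (a circle, by H1), a probability measure varying continuously and invariant under $f$ (because $\omega$ is $f$-invariant and the conditionals transform equivariantly: $f_*\omega^c_\xi=\omega^c_{f_c(\xi)}$), as well as under all stable and unstable holonomies. By accessibility, the stable and unstable holonomies generate a transitive pseudo-group of homeomorphisms between center leaves; together with continuity this forces the measures $\omega^c_\xi$ to be ``coherent'' across the whole manifold. One then uses these measures to build the circle action: normalizing each $\omega^c_\xi$ to a continuously-varying parametrization of the center leaf $\cF^c(\xi)\cong \mathbb{S}^1$ by arclength-in-$\omega^c_\xi$ (i.e. identifying $\cF^c(\xi)$ with $\mathbb{R}/\mathbb{Z}$ so that $\omega^c_\xi$ becomes Lebesgue), one obtains a continuous $\mathbb{S}^1$-coordinate on center leaves. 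Holonomy-invariance of $\omega^c$ makes the stable and unstable holonomies into rotations in these coordinates, and $f$-invariance makes $f$ commute with the resulting $\mathbb{S}^1$-action $\rho_\theta$. Hence $f\in RT(M)$, i.e. $f$ is of rotation type.

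The main obstacle I anticipate is the passage from the \emph{measurable} $s$- and $u$-invariant families to a \emph{continuous} everywhere-defined family, and then the globalization of that family into an actual $\mathbb{S}^1$-action: this is precisely where local product structure of $\nu$ and the Hopf argument enter, and where accessibility is used to propagate the normalization consistently over all of $M$ (one must check the holonomy parametrizations patch together without monodromy, i.e. that the pseudo-group action is by a genuine circle's worth of rotations rather than something with holonomy). In the write-up I would cite Proposition 4.8 of \cite{AV} for the continuity upgrade and invoke accessibility exactly as in the rotation-type characterization of \cite{HHTU}, so that the remaining bookkeeping (equivariance of conditionals under $f$, continuity of the parametrization, commutation with $f$) is routine. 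A minor point to be careful about: the statement of the corollary writes ``$\mu^c$'' and ``$\mu$'' where it presumably means ``$\omega^c$'' and ``$\omega$''; I would silently use $\omega$ throughout for consistency with its hypothesis.
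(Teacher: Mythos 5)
Your proposal follows essentially the same route as the paper: the paper's entire written proof consists of your first paragraph, namely that by Proposition \ref{gibbs-uinvariant} (and its stable analogue) the center conditionals of $\omega$ are measurably $u$- and $s$-invariant, that $\pi_*\omega=\nu$ is the maximal measure of $f_c$ and hence has local product structure, and that the Avila--Viana Hopf-type argument (Theorem D, Proposition 4.8 of \cite{AV}) then yields an everywhere-defined, continuously varying family $\{\omega^c_\xi\}$ invariant under $H^s$ and $H^u$; the paper declares the rest, including the rotation-type conclusion, ``immediate'' from the invariance principle, implicitly delegating the circle-action construction to \cite{AV} and \cite{HHTU}. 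Your second paragraph fills in precisely that delegated step, and it is correct in outline; the one point you should make explicit there is that each $\omega^c_\xi$ is atomless and fully supported in its center circle, since otherwise the measure-normalized parametrization of the leaf is not a homeomorphism and holonomies need not become rotations. Both facts follow from accessibility in the spirit you indicate: the set of atoms of maximal mass and the set $\{x: x\in\supp\omega^c_{\pi(x)}\}$ are closed, invariant under $f$ and saturated by stable and unstable holonomies (by continuity and holonomy-invariance of the family), so accessibility forces the former to be empty (it would meet each circle leaf in finitely many points) and the latter to be all of $M$. With that addition, and the orientation/monodromy bookkeeping you already flag (handled as in \cite{HHTU}), your argument agrees with the paper's intended proof while being more self-contained than the paper's two-line citation.
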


\begin{proof}
By above proposition $\omega$ is both $u$ and $s-$state. As the quotient measure $\pi_{*}(\mu) = \nu$ has local product structure the corollary is immediate from invariance principle (see \cite{AV}).
\end{proof}

We need a main property on the partial entropy along expanding foliations which is the following  upper semi-continuity result:
\begin{proposition}[\cite{Y}]\label{p.uppersemicontinuous}

Let $\cF$ be an expanding foliation of $f$, and  $\mu_n$ be a sequence of invariant probability of $f$.
Suppose $\mu_n$ converge to $\mu_0$ in the weak-* topology, then
$$\limsup_{n \rightarrow \infty} h_{\mu_n}(f,\cF)\leq  h_{\mu_0}(f,\cF).$$
\end{proposition}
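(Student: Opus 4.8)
We sketch an argument; see \cite{Y} for the full details. The plan is to run a Misiurewicz-type semi-continuity argument adapted to the foliated setting: the metric entropy $\mu\mapsto h_\mu(f)$ is not upper semi-continuous in general, but \emph{along an expanding foliation} $f$ expands uniformly, which prevents escape of entropy to small scales. Concretely, the plan has three moves: first, bound $h_\mu(f,\cF)$ from above by a quantity built from a \emph{fixed} finite partition using only finitely many iterates, uniformly in $\mu$; then pass to the weak-$*$ limit at that finite level; finally, remove the auxiliary truncations.

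For the first move I would fix $r_0>0$ below the size of the local $\cF$-plaques and a finite partition $\cP$ of $M$ with atoms of diameter much smaller than the scale of contraction transverse to $\cF$ and with $\mu_n(\partial\cP)=0$ for every $n\ge0$; the latter is possible because only countably many members of a one-parameter family of candidate partitions have boundary through a given point while $\{\mu_n\}_{n\ge0}$ is countable, and invariance then gives $\mu_n(\partial f^i\cP)=0$ for all $i$. Because $f$ is uniformly expanding along $\cF$, an $r_0$-plaque is blown up to macroscopic size in a uniformly bounded number of steps, so $\cP$ ``generates along $\cF$''; the partial-entropy formalism of \cite{Y} then gives, for every $f$-invariant probability $\mu$,
\[
h_\mu(f,\cF)=\lim_{k\to\infty}\frac1k\,H_\mu\!\Big(\bigvee_{i=0}^{k-1}f^{-i}\cP\ \Big|\ \bigvee_{i\ge1}f^{i}\cP\Big)
\]
(this is the form for $\cF=\cF^u$, the case used in this paper; in general one conditions in addition on the forward iterates of a fixed partition resolving the directions of $\cF^u$ complementary to $\cF$), and, since $f^{k}\big(\bigvee_{i\ge1}f^{i}\cP\big)\supseteq\bigvee_{i\ge1}f^{i}\cP$, the sequence $k\mapsto H_\mu\big(\bigvee_{i=0}^{k-1}f^{-i}\cP\mid\bigvee_{i\ge1}f^{i}\cP\big)$ is subadditive, so each finite-$k$ term on the right is $\ge h_\mu(f,\cF)$. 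Truncating the conditioning to the finite partition $\bigvee_{i=1}^{\ell}f^{i}\cP$ only raises conditional entropy, so for all $k,\ell\in\NN$ and all $n\ge0$,
\[
h_{\mu_n}(f,\cF)\ \le\ \frac1k\,H_{\mu_n}\!\Big(\bigvee_{i=0}^{k-1}f^{-i}\cP\ \Big|\ \bigvee_{i=1}^{\ell}f^{i}\cP\Big)=:H_{k,\ell}(\mu_n),
\]
which now involves only finitely many iterates of the single fixed finite partition $\cP$.

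The second move is routine: for fixed $k,\ell$ the function $\mu\mapsto H_{k,\ell}(\mu)$ is a finite algebraic expression in the numbers $\mu(C)$, with $C$ ranging over the atoms of $\bigvee_{i=-\ell}^{k-1}f^{i}\cP$, hence continuous at every invariant measure giving zero mass to the (fixed, finite) union of their boundaries; all $\mu_n$ and $\mu_0$ qualify, so $H_{k,\ell}(\mu_n)\to H_{k,\ell}(\mu_0)$ as $n\to\infty$. Combining with the first move, $\limsup_n h_{\mu_n}(f,\cF)\le\limsup_n H_{k,\ell}(\mu_n)=H_{k,\ell}(\mu_0)$ for every $k,\ell$; letting $\ell\to\infty$ (monotone convergence of conditional entropies) and then $k\to\infty$, the displayed identity for $\mu_0$ shows the right-hand side tends to $h_{\mu_0}(f,\cF)$, which is the asserted inequality.

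The hard part is the first move — precisely, the fact that a \emph{single}, $\mu$-independent, fixed-scale finite partition $\cP$ computes $h_\mu(f,\cF)$ for all invariant $\mu$ through the displayed finitary formula. This is where the uniform expansion of $f$ along $\cF$ is genuinely used (together with bounded geometry of the leaves, and here the identification of $\cW^u$ with the unstable foliation of a linear Anosov map): it guarantees that an $r_0$-plaque is resolved into points by the $\cP$-itinerary in uniformly bounded time, and that the residual transverse ambiguity is captured by finitely many further iterates. Everything past this uniform finitary representation of partial entropy is the standard Misiurewicz bookkeeping; establishing that representation is the real work, and is the content of the corresponding result in \cite{Y}.
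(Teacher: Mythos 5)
There is a genuine gap in your first move, and it is fatal to the scheme. For any finite partition $\cP$ of $M$ and any $f$-invariant $\mu$, the chain rule for conditional entropy together with invariance gives, for every $k$,
\[
\frac1k\,H_\mu\Bigl(\bigvee_{i=0}^{k-1}f^{-i}\cP \Bigm| \bigvee_{i\ge1}f^{i}\cP\Bigr)\;=\;H_\mu\Bigl(\cP \Bigm| \bigvee_{i\ge1}f^{i}\cP\Bigr)\;=\;h_\mu(f,\cP),
\]
so your displayed identity for $\cF=\cF^u$ asserts $h_\mu(f,\cF^u)=h_\mu(f,\cP)$, i.e.\ (once $\cP$ is fine) that the partial entropy along $\cF^u$ coincides with the full metric entropy $h_\mu(f)$. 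This is false in general, and false exactly in the situations this paper exploits: for $f\in\SPH_1(M)$ not of rotation type and $\mu$ an ergodic maximal measure with positive center exponent one has $h_\mu(f)=h_\nu(f_c)=h_{top}(f)$, while $\mu\notin\Gibb^u_\nu(f)$ and hence, by Theorem \ref{u-invariantprinciple}, $h_\mu(f,\cF^u)<h_\nu(f_c)$. The reason is that conditioning on the past $\bigvee_{i\ge1}f^{i}\cP$ of a finite partition of the whole manifold does not localize the entropy to the $\cF$-direction: its atoms do not resolve the center direction, so the quantity you compute picks up the transverse (center) contribution as well. Consequently your chain of inequalities can only yield $\limsup_n h_{\mu_n}(f,\cF)\le h_{\mu_0}(f,\cP)\approx h_{\mu_0}(f)$, which is strictly weaker than the proposition and useless in its application (one needs precisely the bound by $h_{\mu_0}(f,\cF^u)$ to conclude $\mu\in\Gibb^u_\nu(f)$). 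Your parenthetical remark about conditioning in addition on a partition ``resolving the complementary directions'' is exactly where all the work lies, and it is not supplied; as written, the ``uniform finitary representation'' you rely on does not exist in this form.

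The paper's proof proceeds differently and avoids the Misiurewicz subadditivity device altogether: it approximates the conditional entropy $H_\mu(f^{-1}\xi^u\mid\xi^u)$ \emph{monotonically from above} by conditional entropies of finite partitions whose atoms are saturated by unstable plaques. One refines the local transversals $\cF^{cs}_{loc}(x_i)=\pi^{-1}(W^s_{loc}(p_i))$ by finite partitions $\cC_{i,t}$ with shrinking diameter and with $\xi^u$-saturated boundaries of zero measure for every $\mu_n$, saturates by $\xi^u$ to get $\tilde{\cC}_t\nearrow\xi^u$, and refines by the $\pi$-preimage of $f_c^{-1}\xi^u_c$ to get $\overline{\cC}_t\nearrow f^{-1}\xi^u$ with $\overline{\cC}_1\vee\tilde{\cC}_t=\overline{\cC}_t$; this yields $H_{\mu_n}(\overline{\cC}_t\mid\tilde{\cC}_t)\searrow H_{\mu_n}(f^{-1}\xi^u\mid\xi^u)$ for every $n$, so one fixed large $T$ gives an upper bound valid for all $\mu_n$ simultaneously, and continuity of $\mu\mapsto H_\mu(\overline{\cC}_T\mid\tilde{\cC}_T)$ on boundary-null measures (your second move, which is fine) finishes the proof. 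The essential ingredient your proposal is missing is this $\xi^u$-saturation of the approximating finite partitions, which is what makes the finite conditional entropies decrease to the \emph{partial} entropy rather than to the full entropy.
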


In the following, we show the idea of the proof of the above proposition when $f\in \SPH_1$, since
in this case the discussion is much simpler.

\begin{proof}[Sketch of proof:]
We need to show
\begin{equation}\label{eq.semicontinuous}
\limsup_{n \rightarrow \infty} H_{\mu_n}(f^{-1}\xi^u\mid \xi^u)\leq H_\mu(f^{-1}\xi^u\mid \xi^u).
\end{equation}

Fix a point $x_i\in \pi^{-1}(p_i)$ for each $i=1,\dots, k$. Consider a sequence of finite partitions
$\cC_{i,1}\leq\cC_{i,2}\leq \dots$ on
$$\cF^{cs}_{loc}(x_i)=\pi^{-1}(W^s_{loc}(p_i)),$$
such that
\begin{itemize}
\item[(A)] $\diam(\cC_{i,t})\to 0$;

\item[(B)] For any $i,t$ and any element $C$ of $\cC_{i,t}$, $\mu_n(\cup_{x\in \partial C}\xi^u(x))=0$ for every $n$.
\end{itemize}
Then for every $t>0$, there are two finite partitions $\tilde{\cC}_{t}$ and $\overline{\cC}_t$:

$$\tilde{\cC}_{t}=\{\cup_{x\in C} \xi^u(x); C \text{ is an element of $\cC_{i,t}$ for some $1\leq i \leq k$}\};$$
and
$$
\overline{\cC}_t=\{\tilde{C}\cap \pi^{-1}(P), \text{ where $\tilde{C}$ is an element of $\tilde{\cC}_t$ and $P$ is an element of $f^{-1}(\xi^u_c)$}\}.
$$

Then $\tilde{\cC}_t\leq \overline{\cC}_t$ and both sequence of partitions are increasing. Moreover, we have
\begin{itemize}
\item[(i)] $\tilde{\cC}_t \nearrow \xi^u$;
\item[(ii)] $\overline{\cC}_t\nearrow f^{-1}\xi^u$;
\item[(iii)] $\mu_n(\partial(\tilde{\cC}_t))=0$ and $\mu_n(\partial(\overline{\cC}_t))=0$ for every $t,n$.
\end{itemize}
We claim that for each $n\in \mathbb{N}$,
\begin{equation}\label{eq.finteapproach}
H_{\mu_n}(\overline{\cC}_t\mid \tilde{\cC}_t)\searrow H_{\mu_n}(f^{-1}(\xi^u)\mid\xi^u).
\end{equation}
To prove this claim, first by (i):
$$H_{\mu_n}(\overline{\cC}_1 \mid \tilde{\cC}_t)\searrow H_{\mu_n}(\overline{\cC}_1\mid\xi^u).$$
It follows that
$$H_{\mu_n}(\overline{\cC}_1\vee\tilde{\cC}_t  \mid \tilde{\cC}_t)\searrow H_{\mu_n}(\overline{\cC}_1\vee \xi^u\mid\xi^u).$$
From the construction of the partitions $\tilde{\cC}_t$ and $\overline{\cC}_t$, it is easy to see that
$\overline{\cC}_1\vee \tilde{\cC}_t=\overline{\cC}_t$ and $\overline{\cC}_1\vee \xi^u=f^{-1}(\xi^u)$, and the
proof of this claim follows  immediately.

Now we continue the proof of \eqref{eq.semicontinuous}. By the above claim, for any $\vep>0$, there is $T$
sufficiently large, such that $H_{\mu_0}(\overline{\cC}_T\mid \tilde{C}_T)<H_{\mu_0}(f^{-1}(\xi^u)\mid \xi^u)+\vep$.
Because both partitions $\overline{\cC}_T$ and $\tilde{\cC}_T$ are finite and their boundaries have zero measure
for any $\mu_n$, we have that
\begin{equation*}
\begin{aligned}
H_{\mu_0}(\overline{\cC}_T\mid \tilde{\cC}_T)&=H_{\mu_0}(\overline{\cC}_T)-H_{\mu_0}(\tilde{\cC}_T)\\
&=\lim H_{\mu_n}(\overline{\cC}_T)-\lim H_{\mu_n}(\tilde{\cC}_T)\\
&=\lim H_{\mu_n}(\overline{\cC}_T\mid \tilde{\cC}_T).\\
\end{aligned}
\end{equation*}
Again by the above claim, we show that
\begin{equation*}
\begin{aligned}
\limsup H_{\mu_n}(f^{-1}(\xi^u)\mid \xi^u)&\leq \lim H_{\mu_n}(\overline{\cC}_T\mid \tilde{\cC}_T)\\
&=H_{\mu_0}(\overline{\cC}_T\mid \tilde{\cC}_T)\\
&\leq H_{\mu_0}(f^{-1}(\xi^u)\mid \xi^u)+\vep.\\
\end{aligned}
\end{equation*}
Since we can take $\vep$ arbitrarily small, we finished the proof of \eqref{eq.semicontinuous}.

\end{proof}

\subsection{Proof of Theorems \ref{main.nonunihyp} and \ref{main.converging}}
Let us first prove Theorem ~\ref{main.converging}. As the center exponents of $\mu_n$ are non-positive, the Pesin unstable lamination coincides
to the unstable foliation. Then we have
$$h_{\mu_n}(f)=h_{\mu_n}(f,\cF^u),$$
which was proved in Ledrappier-Young \cite[Corollary 5.3]{LY} under the assumption that $f$ is $C^2$. See the proof of Corollary \ref{exp-invariantprinciple} for more details.

By our assumption and Proposition~\ref{p.uppersemicontinuous},
$$h_\mu(f,\cF^u)\geq \limsup_{n\to \infty}h_{\mu_n}(f,\cF^u).$$
But by variational principle, it is clear that
$h_\mu(f,\cF^u)\leq h_\mu(f)\leq  h_{top}(f)$.
Hence, we have the equality: $h_\mu(f,\cF^u)=h_{top}(f) = h_{\nu} (f_c)$. Then as a corollary of
Theorem \ref{u-invariantprinciple}, $\mu\in \Gibb^u_\nu$.

Now it is sufficient to prove the following lemma,
\begin{lemma} If $f$ is not of rotation type, then $\Gibb^u_\nu(f) = V^{-}$ where $V^{-}$ the set of invariant probabilities which are combination of $\mu^-_1,\dots, \mu^-_{k(-)}$.
\end{lemma}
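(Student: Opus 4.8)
The plan is to prove both inclusions. The inclusion $V^- \subseteq \Gibb^u_\nu(f)$ follows from Proposition~\ref{p.invariant principle}: each $\mu^-_i$ is an ergodic maximal measure with negative (hence non-positive) center exponent, so $\pi_*(\mu^-_i)=\nu$ and $\mu^-_i \in \Gibb^u_\nu(f)$; since $\Gibb^u_\nu(f)$ is convex (Proposition~\ref{p.Gibbs u state}(a)), every combination of the $\mu^-_i$ lies in $\Gibb^u_\nu(f)$. For the reverse inclusion, I would take $\mu \in \Gibb^u_\nu(f)$; by Proposition~\ref{p.Gibbs u state}(b) almost every ergodic component of $\mu$ is again in $\Gibb^u_\nu(f)$, so it suffices to show that every \emph{ergodic} $\mu \in \Gibb^u_\nu(f)$ is one of the $\mu^-_i$. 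Given such $\mu$, Theorem~\ref{u-invariantprinciple} (equality case) gives $h_\mu(f,\cF^u) = h_\nu(f_c) = h_{top}(f)$, and since the center foliation is one-dimensional with zero-entropy fibers, Corollary~\ref{onedimensional} (via the Ledrappier--Walters formula) gives $h_\mu(f) = h_\mu(f,\cF^u) = h_{top}(f)$; hence $\mu$ is a measure of maximal entropy for $f$.

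Now I invoke Theorem~\ref{dichotomy}: since $f$ is not of rotation type, its ergodic maximal measures are exactly $\mu^+_1,\dots,\mu^+_{k(+)}, \mu^-_1,\dots,\mu^-_{k(-)}$, the $\mu^+_i$ having positive center exponent and the $\mu^-_i$ negative center exponent. So it remains only to rule out $\mu = \mu^+_i$, i.e. to show $\mu$ cannot have positive center exponent. The key point is the symmetric statement of Proposition~\ref{p.invariant principle}: an ergodic maximal measure with non-negative center exponent lies in $\Gibb^s_\nu(f)$. If $\mu = \mu^+_i$ had positive center exponent, then applying Corollary~\ref{exp-invariantprinciple} to $f^{-1}$ (for which the center exponent of $\mu$ is negative, and whose unstable foliation is $\cF^s$) shows $\mu$ is $s$-invariant, i.e. $\mu \in \Gibb^s_\nu(f)$. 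But we already have $\mu \in \Gibb^u_\nu(f)$, so $\mu \in \Gibb^u_\nu(f)\cap\Gibb^s_\nu(f)$, and the Corollary following Proposition~\ref{p.invariant principle} forces $f$ to be of rotation type — contradicting our hypothesis. Therefore $\mu$ has negative center exponent and equals some $\mu^-_i$.

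Assembling: every ergodic $\mu\in\Gibb^u_\nu(f)$ is some $\mu^-_i$, so by the ergodic decomposition argument every $\mu\in\Gibb^u_\nu(f)$ is a combination of the $\mu^-_i$, i.e. lies in $V^-$; together with the first inclusion this gives $\Gibb^u_\nu(f) = V^-$. I expect the main obstacle to be the careful bookkeeping in the step that upgrades $h_\mu(f,\cF^u) = h_{top}(f)$ to $\mu$ being a genuine maximal measure of $f$ — one must check that $\cF^u$ really is the Pesin unstable lamination of $\mu$ (which uses the non-positivity of the center exponent, established along the way from ergodicity plus $\mu\in\Gibb^u_\nu$ once one excludes the positive case) and invoke the $C^2$ Ledrappier--Young identity; and the logically delicate point is getting the sign of the center exponent under control before applying the rotation-type dichotomy, which is why the argument is set up to derive a contradiction from the positive case rather than excluding it directly.
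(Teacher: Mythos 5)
Your proposal is correct and follows essentially the same route as the paper: both inclusions come from Proposition~\ref{p.invariant principle} together with Proposition~\ref{p.Gibbs u state}, maximality of an ergodic $\mu\in\Gibb^u_\nu(f)$ comes from the one-dimensional circle fibers (Ledrappier--Walters), and the positive-exponent case is excluded because it would place $\mu$ in $\Gibb^u_\nu(f)\cap\Gibb^s_\nu(f)$ and force $f$ to be of rotation type. Your explicit contradiction via the corollary following Proposition~\ref{p.invariant principle} is in fact a cleaner rendering of the paper's rather telegraphic citation at that step, and your closing worry about verifying the Ledrappier--Young identity is not needed here, since $h_\mu(f)=h_\nu(f_c)$ follows from the relative variational principle with zero fiber entropy rather than from Pesin theory.
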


\begin{proof}
By Proposition~\ref{p.invariant principle}, $\mu^-_i\in \Gibb^u_\nu(f)$ for each $1\leq i \leq k(-)$ and consequently $$V^-\subset \Gibb^u_\nu(f).$$
Now let $\mu$ belongs to $\Gibb^u_\nu(f)$, by Proposition~\ref{p.Gibbs u state} (b), we may assume that it is ergodic.
As $\mu$ projects to $\nu$ which a maximal measure for $f_c$ we have that  $\mu$ is a maximal measure, then, by Theorem~\ref{dichotomy},
the center exponent of $\mu$, $\lambda^c(\mu)$, can not vanish. We claim that $\lambda^c(\mu)<0$.
Suppose by contradiction that $\lambda^c(\mu)>0$, then
by Proposition~\ref{p.Gibbs u state}, $\mu\in \Gibb^s_\nu(f)$, which contradicts Proposition~\ref{p.invariant principle}.

Therefore $\mu$ is a maximal measure with negative center exponent, apply Theorem~\ref{dichotomy} again,
$\mu=\mu^-_i$ for some $1\leq i \leq k(-)$. Thus $$\Gibb^u_\nu(f)\subset V^-$$ and the proof is complete.

\end{proof}
So we have proved Theorem \ref{main.converging} and the proof of Theorem \ref{main.nonunihyp} is a simple corollary. Indeed, suppose that by contradiction there is a sequence of ergodic measures $\mu_n$ such that $h_{\mu_n} \rightarrow h_{top}(f)$ and  without loss of generality we assume that $\lambda^c(\mu)_n \leq 0$ and converge to zero. Let $\mu$ be an accumulation point of $\mu_n.$
By continuity argument $$\lambda^c(\mu_n) \rightarrow \lambda^c(\mu):= \int_{M} \log Df|_{E^c(x)} d \mu(x).$$
By Theorem ~\ref{main.converging}, $\mu$ is a convex combination of $\mu^{-}_1, \cdots, \mu^{-}_{k^{-}}$ so, $$|\lambda^c (\mu)|\geq \min \{ \lambda^c(\mu^ {-}_1), \cdots, \lambda^c ( \mu^{-}_{k^{-}})   \}, $$
which is a contradiction.


\begin{thebibliography}{10}
\bibitem{AV}
A. Avila and M. Viana.
\newblock {Extremal Lyapunov exponents: an invariance principle and applications,}
\newblock {\em Invent. Math.} 181:115--189, 2010.

\bibitem{Bax}
Baxendale, P. 
\newblock{Lyapunov exponents and relative entropy for a stochastic flow of diffeomorphisms,}
\newblock{\em Probab. Theory Related Fields.} 81, no. 4, 521--554, 1989. 

\bibitem{BPS}
L. Barreira, Ya. Pesin, and J. Schmeling.
\newblock {Dimension and product structure of hyperbolic measures,}
\newblock {\em Ann. of Math.}, 149:755--783, 1999.

\bibitem{BBD}
J. Bochi, C. Bonatti and L. D\'{i}az.
\newblock {Robust criterion for the existence of nonhyperbolic ergodic measures,}
\newblock {\em Comm. in Math. Phys.}, 344,  no. 3: 751--795, 2016.

\bibitem{Brown}
A. Brown.
\newblock {Smoothness of stable holonomies inside center-stable manifolds and the $C^2$ hypothesis in Pugh-Shub and Ledrappier-Young theory,}
\newblock { 	arXiv:1608.05886 [math.DS], Preprint 2016}.

\bibitem{Buz}
J. Buzzi.
\newblock{The almost Borel structure of diffeomorphisms with some hyperbolicity,}
\newblock{ Hyperbolic
dynamics, fluctuations and large deviations, p. 9--44, Proc. Sympos. Pure Math., 89,
Amer. Math. Soc., Providence, RI,}
\newblock{2015}.

\bibitem{cra}
\newblock{Crauel, Hans.}
\newblock{Extremal exponents of random dynamical systems do not vanish,}
\newblock{ 
J. Dynam. Differential Equations 2, no. 3, 245--291,}
\newblock{1990}.

\bibitem{DGR}
L. D\'{i}az, K. Gelfert and M. Rams.
\newblock{Nonhyperbolic step skew products: Ergodic approximation,}
\newblock{\em to appear in Annales de l'Institut Henri Poincar\'{e} / Analyse non lineaire}.


\bibitem{DKN}
 Deroin, Bertrand; Kleptsyn, Victor and Navas, Andr\'{e}s.
 \newblock{Sur la dynamique unidimensionnelle en r\'{e}gularit\'{e} interm\'{e}diaire. (French) [One-dimensional dynamics in intermediate regularity],}
 \newblock{ Acta Math.} 199 (2007), no. 2, 199--262. 
 
\bibitem{DG}
L. D\'{i}az and A. Gorodetski.
\newblock{Non-hyperbolic ergodic measures for non-hyperbolic homoclinic classes,}
\newblock{\em Ergodic Theory and Dynam. systems.}, 29, 1479--1513, 2009.




\bibitem{F}
H. Furstenberg.
\newblock{Non-commuting random products,}
\newblock{\em Trans. Amer. Math. Soc.}, 108, 377--428, 1963.

\bibitem{GT}
A. Gogolev and A. Tahzibi.
\newblock {Center Lyapunov exponents in partially hyperbolic dynamics,}
\newblock { \em  Journal of Modern Dynamics} 8, no. 3/4, 549--576 (2014)



\bibitem{HP}
A. Hammerlindl and R. Potrie.
\newblock {Pointwise partial hyperbolicity in three dimensional nilmanifolds,} 
\newblock { \em Journal of the London Math. Society} 89, n3. 853--875 (2014).

\bibitem{Hi}
K. Hiraide.
\newblock{ On homeomorphisms with Markov partitions. }
Tokyo J. Math. 8, no. 1, 219-–229, 1985. 

\bibitem{HHTU}
F. Rodriguez Hertz, M. A. Rodriguez Hertz, A. Tahzibi and R. Ures.
\newblock {Maximizing measures for partially hyperbolic systems with compact center leaves.}
\newblock {\em Ergodic Theory and Dynamical Systems} 32, 02:825--839, 2012.

\bibitem{HHU}
F. Rodriguez Hertz, M. A. Rodriguez Hertz, and R. Ures.
\newblock {A non-dynamically coherent example on $\mathbb{T}^3$.}
\newblock{\em  Annales de l'Institut Henri Poincar\'{e} (C) Non Linear Analysis.} vol 33, issue 4. 1023--1032.


\bibitem{HPS}
M. Hirsch, C. Pugh, and M. Shub.
\newblock {Invariant manifolds,}
\newblock volume 583 of Lect. Notes in Math. Springer Verlag, 1977.



\bibitem{L}
F. Ledrappier.
\newblock{Positivity of the exponent for stationary sequences of matrices.}
\newblock{ \em In Lyapunov
exponents (Bremen, 1984), volume 1186 of Lect. Notes Math.} pages 56--73. Springer, 1986.

\bibitem{L1}
F. Ledrappier.
\newblock{Propri{\'e}t{\'e}s ergodiques des measures de Sinai.}
\newblock{ \em Publications math{\'e}matiques de l'I.H.E.S.,}  59, 163--188, 1984.

\bibitem{LS}
F. Ledrappier and J.M. Strelcyn.
\newblock{A proof of the estimation from below in {P}esin's entropy formula,}
\newblock{Ergodic Theory Dynam. Systems,} Vol 2. No. 2, 203--219, 1982.

\bibitem{LW}
F. Ledrappier and P. Walters.
\newblock {A relativised variational principle for continuous transformations,}
\newblock {\em J. London Math. Soc.}, 16:568--576, 1977.


\bibitem{LY}
F. Ledrappier and L.-S. Young.
\newblock {The metric entropy of diffeomorphisms. I. Characterization
of measures satisfying Pesin's entropy formula.}
\newblock {\em Ann. of Math.}, 122:509--539, 1985.

\bibitem{LY2}
F. Ledrappier and L.-S. Young.
\newblock {The metric entropy of diffeomorphisms. II. Relations between entropy, exponents and dimension.}
\newblock {\em Ann. of Math.}, 122:540--574, 1985.


\bibitem{PSW}
C. Pugh, M. Shub and A. Wilkinson,
\newblock {H\"{o}lder foliations,}
\newblock {\em Duke Math. J.}, 86, no. 3, 517--546, 1997.

\bibitem{R}
V. A. Rokhlin,
\newblock {Lectures on the entropy theory of measure-preserving transformations,}
\newblock {\em Russ. Math. Surveys}, 22:5, 1--52, 1967.

\bibitem{ures}
R. Ures
\newblock{Intrinsic ergodicity of partially hyperbolic diffeomorphisms with hyperbolic linear part,}
\newblock{Proceedings of the American Mathematical Society 140(6):1973--1984, 2012
}

\bibitem{UVY}
R. Ures, M. Viana and J. Yang.
\newblock {Maximal measures of diffeomorphisms of circle filber bundle,}
\newblock preprint, 2016.

\bibitem{Y}
J. Yang.
\newblock {Entropy along expanding foliations,}
\newblock {\em Preprint, 	arXiv:1601.05504 [math.DS]}, 2016.

\bibitem{YV}
J. Yand and M. Viana.
\newblock{Physical measures and absolute continuity for one-dimensional center direction,}
\newblock{ Annales Inst. Henri Poincaré - Analyse Non-Lin\'{e}aire, 30, 845--877, 2013.}

\bibitem{Vi}
M. Viana.
\newblock{ \em Almost all cocycles over any hyperbolic system have nonvanishing Lyapunov exponents.} 
\newblock{\em Ann. of Math, 167, 643–-680, 2008.}
\end{thebibliography}
\end{document}